\def\ra{\rightarrow}
\def\cal{\mathcal} 
\def\wt{\widetilde}
\def\ol{\overline}
\def\CC{\mathbb{C}}
\def\PP{\mathbb{P}}
\def\QQ{\mathbb{Q}}
\def\ZZ{\mathbb{Z}}
\def\NN{\mathbb{N}}  
\def\RR{\mathbb{R}}
\def\OO{\cal O} 
\def\XX{\cal X}
\def\mm{\cal M}
\def\nn{\cal N}
\def\s-{\setminus}
\def\bcp{\mathbb C\mathbb P}
\newtheorem{main}{Theorem}
\newtheorem{thm}{Theorem}[section]
\newtheorem{prop}[thm]{Proposition}
\newtheorem{lemma}[thm]{Lemma}
\newtheorem{defn}[thm]{Definition}
\newtheorem{rmk}[thm]{Remark}
\numberwithin{equation}{section}
\begin{document}

\title{ALE Ricci-flat K\"ahler metrics and deformations of quotient surface singularities}

\footnotetext{The author is supported in part by NSF grant DMS-1007114 .}
\author{Ioana {\c S}uvaina}

\address{Ioana {\c S}uvaina, 
        Department of Mathematics,1326 Stevenson Center, Vanderbilt University, Nashville, TN, 37240}
        
\email{ioana.suvaina@vanderbilt.edu}



\date{}

\begin{abstract}
Let $N_0=\CC^2/H$ be an isolated quotient singularity with $H\subset U(2)$ a finite subgroup.
We  show that for any $\QQ-$Gorenstein  smoothings of $N_0$ a nearby fiber admits ALE Ricci-flat K\"ahler metrics in any K\"ahler class. 
Moreover, we generalize Kronheimer's results on hyperk\"ahler $4-$manifolds \cite{kron2}, by giving an explicit classification of the ALE Ricci-flat K\"ahler surfaces.

We construct ALF Ricci-flat K\"ahler metrics on the above non-simply connected manifolds. These provide new examples of ALF Ricci-flat K\"ahler $4-$manifolds, with cubic volume growth and cyclic fundamental group at infinity.
\end{abstract}

\maketitle
\tableofcontents


\section{Introduction}
\label{intro}

Orbifold K\"ahler spaces appear naturally in the study of the moduli space of metrics. In this paper we consider quotient orbifolds of the form $\CC^2/H$ with $H\subset U(2)$ a finite subgroup acting freely on $\CC^2\setminus\{0\}.$   There are two techniques to associate a smooth manifold to a singular space: one is resolving the singularity by introducing an exceptional divisor and the other is deforming the orbifold to a smooth manifold.

If we consider the resolution of the singularity, then there are two cases which behave quite differently. The first case is when $H$ is a subgroup of $SU(2).$ In this situation the singularities are called rational double points, and they are classified to be of type $A_k, D_k, k\geq1, E_6, E_7, E_8,$ corresponding to the tree configuration of self-intersection $(-2)$ rational curves which appear in the minimal resolution. We call the underlying differential manifold of the minimal resolution an $A_k, D_k, E_6, E_7$ or $E_8-$manifold. These manifolds are shown to support a hyperk\"ahler structure. They were extensively studied and classified by Eguchi-Hanson \cite{egha}, Hitchin \cite{hit}, Gibbons-Hawking \cite{giha}, Kronheimer \cite{kron1,kron2}, Joyce \cite{joy} and  others. They have proved the existence of asymptotically locally Euclidean (ALE)  Ricci-flat K\"ahler metrics on these manifolds. In the second case, when  $H$ is a subgroup of $U(2)$ rather than $SU(2),$ the minimal resolution has non-trivial canonical line bundle, and hence it does not admit Ricci-flat K\"ahler metrics. In the special case when the group $H$ is cyclic Calderbank and Singer \cite{casi} showed that the minimal resolution admits scalar flat curvature K\"ahler metrics, which are  ALE. To the author's knowledge, little is known for other groups.

The second method of obtaining a smooth manifold is by deforming the singularity. There are again two cases which arise. The first situation, when the subgroup $H$ is a subgroup of $SU(2),$ yields in fact  the hyperk\"ahler spaces, but endowed with a different complex structure. Only for $H$ a subgroup of $SU(2)$ the resolution and the smoothed manifolds are diffeomorphic.  More details on the results on hyperk\"ahler manifolds will be described in sections \ref{twistor} and \ref{clasific}.
 For $H$ a subgroup of $U(2),$ some  power of the canonical line bundle of the smoothing of the singularity is  trivial and we can prove that the manifold admits a best metric which is ALE Ricci-flat K\"ahler:


\begin{main}\label{main-A}
Let $(N_0,0)$ be an isolated quotient singularity of complex dimension $2.$ Assume there exists  $\pi: \cal{N}\to \Delta\subset \CC$ a one-parameter $\QQ-$Gorenstein smoothing  where $\Delta$ is a  disk around the origin and  $\pi^{-1}(0)=N_0.$ Then for arbitrary small $t\in \Delta\setminus\{0\}$ the manifold $N_t=\pi^{-1}(t)$ admits an ALE Ricci-flat  K\"ahler metric $g$. 
More precisely, in any K\"ahler class of $N_t$ there exists a unique ALE Ricci-flat K\"ahler metric.
\end{main}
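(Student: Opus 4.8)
The plan is to produce the Ricci-flat metric as the solution of a complex Monge-Ampère equation on the open surface $N_t$, invoking the existence theory for Ricci-flat K\"ahler metrics in the ALE (non-compact) setting. The first step is complex-geometric. For small $t\neq 0$ I would show that $N_t$ is a smooth quasi-projective surface whose end is modeled, as a complex manifold, on $(\CC^2\setminus\{0\})/H$, so that $N_t$ is ALE with group $H$ at infinity: since smoothing does not alter the link $S^3/H$ of the singularity away from the center, the end of $N_t$ is diffeomorphic and asymptotically biholomorphic to that of $N_0$. Simultaneously I would extract from the $\QQ$-Gorenstein hypothesis that $K_{\cal{N}/\Delta}$ is $\QQ$-Cartier and that its restriction $K_{N_t}$ is a torsion element of $\pic(N_t)$; combined with the flat ALE end this produces a nowhere-vanishing holomorphic volume form $\Omega$ on $N_t$ (single-valued after a finite cyclic cover if $K_{N_t}$ is torsion but nontrivial), asymptotic to the standard holomorphic $2$-form on the cone.

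Given a K\"ahler class, I would then choose a reference ALE K\"ahler metric $\omega_0$ representing it, normalized so that near infinity it agrees to high order with the flat Euclidean metric on $\CC^2/H$. The Ricci-flat representative is sought in the form $\omega=\omega_0+i\partial\bar\partial u$ satisfying $(\omega_0+i\partial\bar\partial u)^2=e^F\omega_0^2$, where $F$ is fixed by the normalization $e^F\omega_0^2=c\,\Omega\wedge\bar\Omega$; any solution with $i\partial\bar\partial u$ in the chosen class yields a metric whose Ricci form vanishes. Because $\Omega$ and $\omega_0^2$ are both asymptotically the flat volume form, $F$ decays at infinity, and the problem is precisely the Calabi problem on an ALE manifold.

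To solve the equation I would work in weighted H\"older or Sobolev spaces adapted to the ALE geometry, following the framework of Tian-Yau, Bando-Kasue-Nakajima, and Joyce \cite{joy}. The continuity method reduces solvability to uniform a priori estimates: a $C^0$ estimate via a weighted maximum principle exploiting the decay of $F$ and the Euclidean volume growth $r^4$ of the end, followed by the second-order and third-order Calabi-Yau estimates carried to the non-compact setting through the weighted analysis, and finally Schauder estimates and a bootstrap to show that $u$ decays at the expected ALE rate, so that $\omega$ is genuinely ALE and Ricci-flat. Uniqueness within a fixed K\"ahler class follows from convexity of the Monge-Ampère operator together with the maximum principle applied to the difference of two solutions, both of which decay at infinity.

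The hard part will be the non-compact analysis, specifically the $C^0$ estimate and the control of the decay rate of $u$: unlike Yau's compact theorem one cannot integrate by parts freely, and must instead combine the decay of $F$, the isoperimetric and Sobolev properties of ALE ends of Euclidean volume growth, and injectivity of the Laplacian on suitable weighted spaces to pin down the asymptotics and guarantee that the solution metric is ALE rather than merely complete. A secondary subtlety is the torsion-canonical case, where $\Omega$ exists only after a finite cover and one must verify that the resulting Ricci-flat metric descends equivariantly to $N_t$.
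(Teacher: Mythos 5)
Your route---solving the complex Monge--Amp\`ere equation directly on $N_t$ in weighted spaces---is genuinely different from the paper's proof, which never touches the Calabi problem: the paper first invokes the Koll\'ar--Shepherd-Barron classification (Theorem \ref{deform}) to reduce to two cases, rational double points (where the metrics are already supplied by Hitchin and Kronheimer) and cyclic singularities of type $\frac1{dn^2}(1,dnm-1)$; in the latter case Proposition \ref{unsm} identifies $N_t$ with a free $\ZZ_n$-quotient of a smoothing $M$ of an $A_{dn-1}$-singularity, and the proof consists in showing, through Hitchin's twistor description of the hyperk\"ahler metrics on $M$, that the unique such metric in any $\ZZ_n$-invariant K\"ahler class is itself $\ZZ_n$-invariant and therefore descends to $N_t$. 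A PDE proof along your lines is not absurd---this is how the later theory of asymptotically conical Calabi--Yau manifolds proceeds---but as written your proposal has genuine gaps.

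First, the complex-geometric input is not established. You assert that the end of $N_t$ is ``asymptotically biholomorphic'' to $(\CC^2\setminus\{0\})/H$ because smoothing does not change the link $S^3/H$. The link argument controls only the diffeomorphism type of the end; it gives no control whatsoever on the complex structure, which must converge to the cone structure at the rate $O(r^{-4})$ of Definition \ref{ALE} for the weighted analysis (and for the conclusion of the theorem) to apply, and it likewise gives neither the existence nor the asymptotics of the holomorphic volume form $\Omega$, nor even a well-defined complete complex surface $N_t$ (a priori the smoothing is only a germ over $\Delta$). In the paper all of these facts come out of the explicit equations of the Koll\'ar--Shepherd-Barron/Manetti family, $xy=z^{dn}+\sum_{j}e_{jn}z^{(d-j)n}$ modulo $\ZZ_n$; your approach would need this same classification as input, so the topological remark cannot replace it. Second, you start the continuity method from ``a reference ALE K\"ahler metric $\omega_0$ representing'' the given class and agreeing with the Euclidean metric near infinity. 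That such a representative exists in \emph{every} K\"ahler class of $N_t$ is itself a statement requiring proof (which classes in $H^2(N_t,\RR)$ are K\"ahler, and why each contains an ALE form with the stated asymptotics); nothing in your proposal addresses it. Smaller points: in the torsion-canonical case $\Omega\wedge\ol\Omega$ descends even though $\Omega$ does not, so the Monge--Amp\`ere equation does make sense on $N_t$ itself, and your uniqueness argument needs a $\partial\bar\partial$-lemma with decay in order to write the difference of two Ricci-flat metrics in the same class as $i\partial\bar\partial u$ with $u\to 0$; both of these are fixable, but the two gaps above are not cosmetic---filling the first one essentially forces you through the same algebraic geometry that forms the backbone of the paper's argument.
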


We recall that a normal complex space is $\QQ-$Gorenstein if it is Cohen-Macaulay and a multiple of the canonical divisor is Cartier \cite[3.1]{kosb}.

In the case of a resolution, we know that the complex structure at infinity is the canonical one. When we consider deformations we need to give a more precise definition of what an ALE K\"ahler manifold is:
\begin{defn}\label{ALE}
Let $H$ be a finite subgroup of $U(2)$ acting freely on $\CC^2\setminus \{0\},$ and let $h_0$ be the Euclidean metric and $J_0$ the canonical complex structure on $\CC^2/H.$ We say that a K\"ahler manifold $(N, J, g)$ is an ALE K\"ahler manifold asymptotically to $\CC^2/H$ if there exists a compact subset $K\subset N$ and a map $\pi:N\setminus K\to \CC^2/H$ which is a diffeomorphism between $N\setminus K$ and the subset $\{z\in\CC^2/H: r(z)>R\}$  for some fixed $R\geq 0,$ such that 
$\pi_*(g)-h_0=O(r^{-4})$ and $\pi_*(J)-J_0=O(r^{-4})$.
\end{defn}

In the study of hyperk\"ahler 4-manifolds, a key ingredient used both in Hitchin's (\cite{hit}) and Kronheimer's (\cite{kron2}) proofs
was the fact that the  complex surface is birational  to a deformation of a rational double point singularity. We obtain a similar result in the Ricci-flat case:


\begin{main}\label{main-class}
Let  $(N, J, g, \omega)$  be a smooth  ALE Ricci-flat K\"ahler surface, asymptotic  to $\CC^2/H,$ with $H$ a finite subgroup of $U(2)$ acting freely on ${\CC^2}\setminus\{0\}$. The complex manifold $(N, J)$ can be obtained as the minimal resolution of a fiber of a one-parameter $\QQ-$Gorenstein deformation of the quotient singularity $\CC^2/H.$ Given any K\"ahler class $[\omega]\in H^2(N,\RR)$, then $g$ is the unique ALE Ricci-flat K\"ahler metric $g$  in that class. In particular,  if $N$ is not simply connected then its fundamental group is finite and cyclic,  its universal cover is an $A-$type manifold, and the deck-transformations are explicitly described. 
\end{main}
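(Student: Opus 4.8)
The plan is to lift the problem to the universal cover, upgrade Ricci-flatness to a hyperkähler structure there, invoke Kronheimer's classification, and then push the deck action back down. First I would observe that Ricci-flatness forces the Ricci form, i.e. the curvature of the Chern connection on $K_N$, to vanish, so $K_N$ is a flat Hermitian holomorphic line bundle; on the universal cover $p\colon \wt N\to N$ it is therefore holomorphically trivial and carries a parallel holomorphic volume form $\Omega$. In complex dimension two a Ricci-flat Kähler metric with trivial canonical bundle has holonomy in $SU(2)=Sp(1)$, so $(\wt N,p^*g,\Omega)$ is hyperkähler. By the standard theory of ALE spaces, $\pi_1(N)$ is finite and equals a quotient $H/K$; the end of $\wt N$ is then $(\CC^2\setminus B_R)/K$, so $\wt N$ is ALE asymptotic to $\CC^2/K$. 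Restricting $\Omega$ to this end, lifting to $\CC^2\setminus B_R$ and extending across the origin by Hartogs' theorem produces a $K$-invariant nowhere-vanishing holomorphic $2$-form on $\CC^2$ — the normalization and the $O(r^{-4})$ decay of Definition~\ref{ALE} make it asymptotic to a constant multiple of $dz_1\wedge dz_2$ — whence $K\subset SU(2)$.

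Next I would apply Kronheimer's classification \cite{kron2}: for the fixed complex structure, the simply connected hyperkähler ALE surface $\wt N$ asymptotic to $\CC^2/K$ is biholomorphic to the minimal resolution of a fiber $Y$ of the semiuniversal deformation of the rational double point $\CC^2/K$. The deck group $\Gamma=H/K$ acts on $\wt N$ freely, holomorphically and isometrically, compatibly with the residual $H/K$-action at infinity on $(\CC^2\setminus B_R)/K$; contracting the compact curves it acts on $Y$, and the quotient $Y/\Gamma$ is a fiber of a deformation of $(\CC^2/K)/\Gamma=\CC^2/H$ whose minimal resolution is $N=\wt N/\Gamma$. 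Since $K\subset SU(2)$ the family upstairs is Gorenstein, and dividing by the finite group $\Gamma$ leaves a power of the relative canonical divisor Cartier; this is precisely the assertion that the resulting one-parameter deformation of $\CC^2/H$ is $\QQ$-Gorenstein. This establishes the first statement.

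For the uniqueness in a fixed class I would argue as in Calabi's theorem: two ALE Ricci-flat Kähler metrics in the class $[\omega]$ differ by $\omega'=\omega+i\partial\bar\partial\varphi$ with $\varphi\to 0$ at infinity, and equating the two Ricci-flat Monge-Ampère equations (both with volume form $\Omega\wedge\bar\Omega$) yields a homogeneous equation forcing $\varphi$ to be harmonic for $\omega$; the ALE maximum principle then gives $\varphi\equiv 0$. Combined with the existence supplied by Theorem~\ref{main-A}, this is the uniqueness in any prescribed Kähler class.

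Finally, for the structure of $\pi_1(N)$, the key observation is the dichotomy produced by the free $\Gamma$-action. If $H\subset SU(2)$ then $\Omega$ descends to a global holomorphic volume form on $N$ itself, so $N$ is hyperkähler ALE asymptotic to a rational double point and hence simply connected by \cite{kron2}; therefore $\pi_1(N)\neq 1$ forces $H\not\subset SU(2)$, i.e. $\CC^2/H$ is not a rational double point. By the first statement $\CC^2/H$ is nevertheless $\QQ$-Gorenstein smoothable, so by the classification of $\QQ$-Gorenstein smoothable quotient surface singularities \cite{kosb} it must be a cyclic singularity of type $\frac{1}{dn^2}(1,dna-1)$; in particular $H$ is cyclic, hence so is the subgroup $K\subset SU(2)$ — making $\wt N$ an $A$-type manifold — and $\Gamma=H/K$ is cyclic. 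Reading off the deck transformations from this explicit weighted-homogeneous normal form then gives the last claim. I expect the main obstacle to be the descent of the second paragraph: transferring Kronheimer's algebro-geometric description of $\wt N$ through the free quotient and recognizing $N$ as the minimal resolution of a genuinely $\QQ$-Gorenstein fiber — and, underlying it, verifying that a nontrivial $\Gamma$ can act freely only when $\wt N$ is a smoothing rather than a strict resolution (otherwise $\Gamma$ would fix the contracted point and hence a rational curve in the exceptional configuration, which no nontrivial finite holomorphic action can do freely).
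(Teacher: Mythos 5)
Your proposal follows the paper's skeleton (pass to the universal cover, upgrade to hyperk\"ahler, invoke Kronheimer, push the deck action down), and some of your shortcuts are legitimate: the Hartogs/volume-form argument that the group $K$ at infinity of $\wt N$ lies in $SU(2)$ is a clean substitute for the paper's argument via the intermediate cover $M'$ and Kronheimer's simple-connectedness, and your Calabi-type uniqueness sketch is a reasonable alternative to the paper's reliance on the Hitchin/Kronheimer Torelli-type statement (modulo the asymptotic $\partial\bar\partial$-lemma you use implicitly). But the step you yourself flag as ``the main obstacle'' is a genuine gap, and it is precisely where the paper does essentially all of its work. The assertion that ``the quotient $Y/\Gamma$ is a fiber of a deformation of $(\CC^2/K)/\Gamma=\CC^2/H$'' presupposes a one-parameter family with general fiber $Y$ and central fiber $\CC^2/K$, to which the deck action extends acting trivially on the base and restricting over $0$ to the residual $H/K$-action. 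Kronheimer's classification gives no such equivariant family: a priori the deck group is only a holomorphic isometric action on the single manifold $\wt N$. The paper manufactures the family out of the metric itself: the isometric $G$-action induces a holomorphic action on the twistor space $Z_{\wt N}$, hence on Kronheimer's canonical model $Z^s=(Y\setminus l^s)/\CC^*$ and on the four-fold $Y$ fibered over $\CC^2$ with central fiber $\CC^2/\Gamma$; restricting $\phi:Y\to\CC^2$ to the line $L$ over the fixed point $[0:1]$ of the induced $G$-action on $\bcp^1$ yields exactly the required $G$-equivariant degeneration with trivial action on the base, and only then can Koll\'ar--Shepherd-Barron or Manetti be applied to the quotient family $\XX/G\to\CC$. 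This is also why the paper remarks that the ALE condition is essential: the equivariant degeneration comes from twistor theory attached to the metric, not from abstract deformation theory. Your proposal contains no substitute for this construction.

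Two further points, one of which is an actual error. Your closing parenthetical --- that a nontrivial $\Gamma$ can act freely only when $\wt N$ is a smoothing rather than a resolution, ``otherwise $\Gamma$ would fix the contracted point'' --- is false: a free deck group can permute several disjoint exceptional configurations without stabilizing any of them. Concretely, in the Hitchin/Gibbons--Hawking picture take two regular $n$-gons of equal radius and equal phase at different heights; then $\wt N$ contains $n$ disjoint $(-2)$-curves cyclically and freely permuted by $\ZZ_n$. What is true, and what the paper proves by the tree-counting argument (a tree configuration has one more sphere than intersection points, so no nontrivial element acting freely can stabilize one), is that stabilizers of individual configurations are trivial; hence the induced $G$-action on the canonical model $\wt N^s$ is free, but the quotient fiber $N^s$ may still have $A$-type singularities, with $N$ its minimal resolution --- exactly the situation the theorem's statement anticipates. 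This error propagates into your last paragraph: when the fiber is singular, the family you would have produced is not a smoothing, so Theorem \ref{deform} (the KSB classification of smoothable singularities) does not apply verbatim; the paper handles that case with Manetti's classification of equivariant deformations, Proposition \ref{defman}.
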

 
\begin{rmk}
If $N$ is simply connected then $N$ is hyperk\"ahler and the theorem is in fact a reformulation of Kronheimer's  classification result. In the case when $N$ is non simply connected, then its universal cover can not be a $D$ or $E-$type manifold.
\end{rmk}

\begin{rmk} The ALE is a necessary condition in the above classification  and it is essential for the proof of the theorem \ref{main-class}.
\end{rmk}

In section  \ref{gib-hawk}, we show that the above non-simply connected manifolds  also admit complete asymptotically locally flat (ALF) Ricci-flat K\"ahler metrics and we also give examples of non-simply connected manifolds  with infinite topology which have  complete Ricci-flat K\"ahler metrics, by emulating a  construction of Anderson, Kronheimer and LeBrun \cite{akl}.

If one would like to generalize the $2-$dimensional results to higher dimensions, then there are several approaches which are used for obtaining good metrics on resolutions of singularities. There is no analog of the deformation part, as Schlessinger Rigidity Theorem says that quotient  singularities $\CC^n/H$ with singularity of codimension three or more have no non-trivial deformations \cite{schl}.

\section{Smoothings of  quotient singularities}\label{def-thm}

In this section we give a short summary of the Koll\'ar and Shepherd-Barron's results \cite{kosb} on the classification of isolated quotient  singularities which admit a smoothing.
\begin{defn}
A flat, surjective map $\pi:\XX\ra \Delta,$ where $\Delta\subset\CC$ is an open neighborhood of $0,$ 
 is called a one-parameter $\QQ-$Gorenstein smoothing of a
 normal variety  $X_0,$ if $\pi^{-1}(0)=X_0,$ and the following conditions are satisfied. 
\begin{itemize}
\item [ i)] $\XX$ is $\QQ-$Gorenstein
\item [ ii)] $X_t=\pi^{-1}(t)$ is smooth for every $t\in \Delta-\{0\}.$
\end{itemize}
\end{defn}

For a definition of a $\QQ-$Gorenstein variety and more details on the algebraic geometry aspects see \cite[Def. 3.1]{kosb}. In their paper, Koll\'ar and Shepherd-Barron prove the following classification theorem:

\begin{thm}\cite[3.10]{kosb}\label{deform}
An isolated quotient   surface singularity $(N_0,0)$ which admits a one-parameter $\QQ-$Gorenstein smoothing is either a rational double point or a cyclic singularity of type $\frac1{dn^2}(1,dnm-1)$ for $d>0, n\geq 2$ and $n,m$ relatively prime.
\end{thm}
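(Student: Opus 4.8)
The plan is to follow Koll\'ar and Shepherd-Barron by passing to the canonical (index-one) cover, where the statement becomes a question about equivariant smoothings of rational double points. First I would record that every two-dimensional quotient singularity $N_0=\CC^2/H$ is automatically $\QQ$-Gorenstein: it is rational, hence Cohen-Macaulay, and $K_{N_0}$ is $\QQ$-Cartier since $N_0$ is the quotient of the smooth space $\CC^2$. Setting $H'=H\cap SL(2,\CC)$, the subgroup $H'$ lies in $SU(2)$ after a unitary change of coordinates, so the canonical cover $\wt N_0=\CC^2/H'$ is a rational double point, and the Galois group $G=H/H'$ embeds via $\det$ into $U(1)$; hence $G$ is cyclic of order $r$, where $r$ is the Gorenstein index of $N_0$.

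Next I would relate a $\QQ$-Gorenstein smoothing $\pi:\XX\to\Delta$ of $N_0$ to an equivariant smoothing of $\wt N_0$. Since $\XX$ is $\QQ$-Gorenstein of index $r$, it carries a canonical index-one cover $\wt{\XX}\to\XX$, a cyclic degree-$r$ cover that is étale in codimension one and branched only over the singular locus of $\XX$, which is the single point above $0$. Restricting to fibers, $\wt{\XX}\to\Delta$ is a one-parameter deformation of $\wt N_0$ carrying a fiberwise $G$-action with $\XX=\wt{\XX}/G$, and for each $t\neq 0$ the induced map $\wt X_t\to X_t$ is an unramified $G$-cover of the smooth surface $X_t$. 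Thus $G$ acts freely on the nearby fiber $\wt X_t$, and the problem reduces to: determine for which rational double points $\wt N_0$, equipped with the cyclic $G$-action coming from $H$, there exists a $G$-equivariant one-parameter smoothing on whose general fiber $G$ acts freely.

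The final and hardest step is this equivariant classification. Since $H'$ is binary dihedral or binary polyhedral exactly when $\wt N_0$ is of type $D$ or $E$, I would first dispose of those cases: using the $G$-action on the semiuniversal deformation space of $\wt N_0$ and the root-lattice description of the vanishing cohomology $H^2(\wt X_t,\ZZ)$, a free action forces, via the Lefschetz fixed-point theorem, the divisibility $r\mid\chi(\wt X_t)=1+\mu$ together with finer equivariant constraints that the $D$ and $E$ deformation spaces fail to satisfy; hence when $G$ is nontrivial no such smoothing exists, and $N_0$ must itself be a rational double point. For type $A$, where $\wt N_0=A_{dn-1}=\{xy=z^{dn}\}$ with its diagonal cyclic action, I would exhibit the explicit $G$-equivariant smoothing $xy=z^{dn}+t$, check that the induced action of $G\cong\ZZ/n$ on the general fiber is free, and then unwind the weights of the $H$-action to identify the quotient singularity as precisely $\frac1{dn^2}(1,dnm-1)$ with $\gcd(n,m)=1$. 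The main obstacle is the exclusion of the $D$ and $E$ types: this is not a single numerical inequality but a careful analysis of the cyclic action on the Milnor lattice and on the base of the semiuniversal deformation, and this equivariant deformation-theoretic bookkeeping is where the real work lies.
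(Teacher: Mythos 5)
First, a framing remark: the paper does not prove this statement at all --- it is quoted verbatim from Koll\'ar--Shepherd-Barron \cite[3.10]{kosb}, and the surrounding section only records the supporting facts (Propositions \ref{unsm} and \ref{defman}). So your proposal can only be compared with the argument in the literature. Its architecture is indeed the same as Koll\'ar--Shepherd-Barron's: pass to the canonical index-one cover, note it is a rational double point with cyclic Galois group $G$, lift the $\QQ$-Gorenstein smoothing to a $G$-equivariant smoothing of the cover on whose nearby fibers $G$ acts freely, then classify the possible equivariant smoothings. Your first two steps are essentially sound, modulo one point you assert rather than prove: that forming the index-one cover commutes with restriction to the central fiber (equivalently, that the reflexive power $\omega_{\XX}^{[r]}$ restricts to $\omega_{N_0}^{[r]}$). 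That compatibility is a genuine technical ingredient of the theory in \cite{kosb}, not a formality, though it is standard.

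The genuine gap is your third step, and it is not merely an omission of routine detail: the tool you name cannot do the job. The Lefschetz/Euler-characteristic obstruction is provably insufficient to exclude the $D$ and $E$ cases. Concretely, take the $E_6$ singularity $\{x^2+y^3+z^4=0\}\subset\CC^3$, quasi-homogeneous for the $\CC^*$-action with weights $(6,4,3)$ and degree $12$, and let $G=\ZZ_7\subset\CC^*$. This action is free off the origin (it arises from the subgroup $H=T^*\times\ZZ_7\subset U(2)$, binary tetrahedral times scalars, which acts freely on $\CC^2\setminus\{0\}$ and satisfies $H\cap SU(2)=T^*$), and the Milnor fiber of $E_6$ has $\chi=1+\mu=7$, so your divisibility constraint $r\mid\chi$ holds. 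Nevertheless no $G$-equivariant smoothing exists: choosing the semiuniversal deformation equivariantly, the six deformation parameters of $E_6$ carry the $\CC^*$-weights $\{12,9,8,6,5,2\}$, none of which is divisible by $7$, so the $\ZZ_7$-fixed locus of the base is the origin and the only equivariant deformation is the trivial one. Thus the exclusion of $D$ and $E$ is a weight computation on the base of the semiuniversal deformation --- deformation theory, not topology of the Milnor fiber --- and the same is true in type $A$, where your phrase ``unwind the weights'' understates the work: many cyclic quotients whose index-one cover is of type $A$ admit no $\QQ$-Gorenstein smoothing at all (e.g. $\frac1{10}(1,3)$, whose cover is $A_1$ with a free $\ZZ_5$-action), and singling out the weights $\frac1{dn^2}(1,dnm-1)$ is exactly the determination of when the invariant part of the deformation space contains a smoothing. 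In the literature this analysis is supplied by Catanese's classification of automorphisms of rational double points as used by Manetti --- this is Proposition \ref{defman} of the paper --- and it is precisely the content your proposal leaves unproved.
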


From a topological point of view the two types of singularities are distinguished by the fundamental group of a smoothing. The smoothing of a rational double point is simply connected, while the deformations of the second type have finite cyclic fundamental group.
On the algebraic geometric side of the picture this is reflected on the canonical line bundle being trivial, or non-trivial torsion, respectively.

As the rational double point singularities and their preferred metrics are well understood \cite{hit,kron1}, we focus our attention on the second type of orbifolds.
A singularity of type $\frac1{dn^2}(1,dnm-1)$ is a quotient singularity $\CC^2/\Gamma_{dn^2},$ where $\Gamma_{dn^2}=\{\rho | \rho^{dn^2}=1\}$ is the finite cyclic group of order $dn^2$ which acts on $\CC^2$ diagonally with  weights $(1, dnm-1),$ i.e. if $\rho\in \Gamma_{dn^2}$ and $(z_1,z_2)\in\CC^2$ then $\rho(z_1,z_2)=(\rho z_1, \rho^{dnm-1}z_2).$
If we consider the subgroup of order $dn,$  $\Gamma_{dn}\subset\Gamma_{dn^2},$ then its quotient singularity is of type $\frac1{dn}(1,-1),$ which is the $A_{dn-1}$ rational double point singularity. The quotient singularity $\frac1{dn}(1,-1)$ can be biholomorphically embedded as a hypersurface in $\CC^3,$ $\CC^2/\Gamma_{dn}\to M_0=(xy=z^{dn})\subset\CC^3,$ by the following map: $(z_1,z_2)\to (x,y,z)=(z_1^{dn}, z_2^{dn}, z_1z_2)\in\CC^3.$ On $M_0$ there is an induced action of the quotient group $\Gamma_{dn^2}/\Gamma_{dn}.$ As this group is cyclic of order $n$, we can represent its action by the action of a unit complex number $\xi$ of order $n, \xi^n=1,$ as follows: $\xi(x,y,z)=(\xi x, \xi^{-1}y, \xi^mz).$

The deformations of $A-$type singularities $M_0$ are known. They are given by $\mm\subset \CC^3\times\CC^{dn}, ~
\mm= (xy=z^{dn}+e_1z^{{dn}-1}+\dots+e_{dn}),$ where $(e_1,\dots,e_{dn})\in\CC^{dn}.$ The action of the group $\Gamma_{dn^2}/\Gamma_{dn}$ can be extended trivially on the coordinates of $\CC^{dn}:$
$$
\xi(x,y,z,e_n,\dots ,e_{dn})= (\xi x,\xi ^{-1}y,\xi ^{m}z,e_n,\dots ,e_{dn})
$$
Then, there is a special family of deformations  $\mm'\subset\CC^3\times\CC^{d}$ which is $(\Gamma_{dn^2}/\Gamma_{dn})-$invariant and it is given by the  equation 
$$
xy=z^{dn}+\sum_{j=1}^{d}e_{jn} z^{(d-j)n},
$$ 
where $e_{n},\dots ,e_{dn}$ are linear coordinates on $\CC^d.$     Let $\nn=\mm'{/(\Gamma_{dn^2}/\Gamma_{dn})}$ and $\phi:\nn\ra \CC^d$ be the quotient map of the projection $\mm' \ra \CC^d.$

\begin{prop}\cite{kosb}\label{unsm} 
The map $\phi:\nn\ra \CC^d$ is a $\QQ-$Gorenstein deformation of the cyclic singularity of type  $\displaystyle \frac{1}{dn^2}(1,dnm-1).$
Moreover, every one-parameter $\QQ-$Gorenstein deformation $\XX \ra \Delta$ of a singularity of type 
$\displaystyle \frac{1}{dn^2}(1,dnm-1)$ is isomorphic to the pullback through $\phi$ of some germ of a 
holomorphic map $(\Delta,0)\ra(\CC ^d,0).$ 
\end{prop}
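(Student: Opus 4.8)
The plan is to treat the two assertions in turn: first that $\phi$ is itself a $\QQ$-Gorenstein deformation of $N_0$, and then that it is versal among all one-parameter $\QQ$-Gorenstein deformations, so that each arises as a pullback. Throughout write $G=\Gamma_{dn^2}/\Gamma_{dn}\cong\ZZ/n$. First I would check that $\phi$ is a $\QQ$-Gorenstein deformation. Setting all $e_{jn}=0$ gives $\mm'_0=(xy=z^{dn})=M_0$, the $A_{dn-1}$ singularity, whence $\phi^{-1}(0)=M_0/G=\CC^2/\Gamma_{dn^2}=N_0$. For flatness, since $G$ is finite and acts fiberwise over $\CC^d$, the Reynolds operator exhibits $\OO_{\nn}=(\OO_{\mm'})^{G}$ as an $\OO_{\CC^d}$-direct summand of the flat module $\OO_{\mm'}$, so $\OO_{\nn}$ is flat; the same splitting, now over $\OO_{\nn}$, shows $\nn$ is Cohen-Macaulay because the hypersurface $\mm'$ is. To produce a Cartier power of the canonical divisor I would use the Poincar\'e residue form
$$
\Omega=\operatorname{Res}\frac{dx\wedge dy\wedge dz\wedge de_n\wedge\cdots\wedge de_{dn}}{xy-z^{dn}-\sum_{j=1}^{d}e_{jn}z^{(d-j)n}},
$$
which generates $\omega_{\mm'}$. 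A direct computation, using $\xi^{mdn}=1$ to see that the denominator is invariant, gives $\xi^*\Omega=\xi^m\Omega$; hence $\Omega^{\otimes n}$ is $G$-invariant and descends to a nowhere-vanishing generator of $\omega_{\nn}^{[n]}$, so $\omega_{\nn}^{[n]}$ is invertible, with $n$ the exact index since $\gcd(m,n)=1$. As $\omega_{\CC^d}$ is trivial this also yields $\omega_{\nn/\CC^d}^{[n]}\cong\omega_{\nn}^{[n]}$ invertible, so the $\QQ$-Gorenstein structure is compatible with the family and $\phi$ is a $\QQ$-Gorenstein deformation of $N_0$.

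For versality I would pass to the index-one (canonical) cover. The singularity $N_0$ has index $n$, and its canonical cover is precisely $M_0=A_{dn-1}$ with deck group $G$. The key input, due to Koll\'ar and Shepherd-Barron, is that the $\QQ$-Gorenstein deformation functor of $N_0$ is equivalent to the $G$-equivariant deformation functor of $M_0$: given a one-parameter $\QQ$-Gorenstein deformation $\XX\ra\Delta$, the relative index-one cover produces a flat $G$-equivariant deformation $\widetilde{\XX}\ra\Delta$ of $M_0$, and conversely every equivariant deformation of $M_0$ descends to a $\QQ$-Gorenstein deformation of $N_0$. Under this dictionary the semiuniversal $\QQ$-Gorenstein deformation of $N_0$ is the quotient by $G$ of the semiuniversal equivariant deformation of $M_0$.

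It then remains to identify the equivariant semiuniversal deformation explicitly. Since $M_0$ is a hypersurface its deformations are unobstructed, with semiuniversal family the hypersurface deformation $\mm\ra\CC^{dn}$; the induced action assigns to the deformation parameter $e_i$ the weight $\xi^{mi}$, exactly so that each $e_i z^{dn-i}$ is invariant. The weight-zero directions are those with $n\mid i$, namely $e_n,\dots,e_{dn}$, and restricting $\mm$ to this $d$-dimensional subspace gives precisely $\mm'\ra\CC^d$ with the trivial residual $G$-action. Taking $G$-invariants preserves unobstructedness, so $\mm'\ra\CC^d$ is the semiuniversal equivariant deformation of $M_0$, and $\nn=\mm'/G\ra\CC^d$ is semiuniversal among $\QQ$-Gorenstein deformations of $N_0$. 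Versality then furnishes, for any one-parameter $\QQ$-Gorenstein deformation $\XX\ra\Delta$, a germ $(\Delta,0)\ra(\CC^d,0)$ realizing $\XX$ as the pullback of $\phi$, which is the claim.

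The main obstacle is the construction and control of the index-one cover in families underlying the functor equivalence. One must show that the $n$-torsion of the relative canonical sheaf of a $\QQ$-Gorenstein family $\XX\ra\Delta$ produces a cyclic cover that stays flat over $\Delta$ and specializes to $M_0$ over the origin; the $\QQ$-Gorenstein hypothesis is indispensable precisely here, as it is what prevents the cover from degenerating under specialization. Once this equivalence is secured, the remaining ingredients---the weight computation and the unobstructedness of the hypersurface deformation---are routine.
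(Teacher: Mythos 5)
The paper gives no proof of this proposition at all --- it is quoted verbatim from Koll\'ar--Shepherd-Barron \cite{kosb} --- and your proposal is a correct reconstruction of the argument in that reference: direct verification that $\phi$ is flat with $\QQ$-Gorenstein total space (Reynolds operator for flatness and Cohen--Macaulayness, the residue form computation showing $K$ has index exactly $n$), followed by versality via the index-one cover and the equivalence between $\QQ$-Gorenstein deformations of $N_0$ and $G$-equivariant deformations of the $A_{dn-1}$ hypersurface singularity, with the invariant directions cut out by the weight computation; citing that equivalence from \cite{kosb} is not circular, since it is a separate and more primitive result there than the explicit classification being proved. The only imprecision is minor: the semiuniversal deformation of $M_0=(xy=z^{dn})$ has base $\CC^{dn-1}$, not $\CC^{dn}$ (the Tjurina number is $dn-1$; the coefficient $e_1$ can be eliminated by the substitution $z\mapsto z-e_1/(dn)$), so $\mm\to\CC^{dn}$ is versal rather than semiuniversal --- harmless here, because $e_1$ carries the nonzero weight $\xi^{m}$, so the weight-zero subspace is the same $\CC^d$ spanned by $e_n,\dots,e_{dn}$ in either family.
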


If we consider the case of one-parameter $\QQ-$Gorenstein deformations which are equivariant under some free action of the cyclic group of order $n$, $\ZZ_n,$ and such that the fibers have isolated singularities, then we can ask if we can obtain a similar classification result.
This situation was studied by Manetti in \cite{man} by using Catanese's  explicit classification \cite{cat} of the automorphism group of rational double point singularities. He proves the following:

\begin{prop}\label{defman}
Let $(N_0, 0)\subset(\CC^3,0)$ be a rational double point singularity and let $\mu_n$ be an action of the finite cyclic group of order $n$ on $(N_0, 0)$ coming from a subgroup of  $GL(3, \CC).$ Moreover, there exists a one-parameter deformation of $(N_0, 0),$ $(\mathcal{N}, 0)\subset (\CC^3\times\Delta)\to\Delta$  which is $\mu_n$ invariant, where the action extends trivially to the $\Delta$ coordinate. Assume further that the $\mu_n$ acts freely on $\mathcal{N}\setminus\{0\}.$ Then the singularity $(N_0, 0)$ is a singularity of the form $\displaystyle \frac{1}{dn^2}(1, dnm-1),$ where $n, m$ are relatively prime integers, and the deformation family is of the form described in Proposition \ref{unsm}.
\end{prop}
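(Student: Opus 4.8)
The plan is to reduce, via a fundamental group argument, to the case where $(N_0,0)$ is of type $A$, and then to read off the action and the deformation from Catanese's classification \cite{cat} of automorphisms of rational double points together with Proposition \ref{unsm}. First I would lift the $\mu_n$-action. Writing $N_0=\CC^2/\Gamma$ with $\Gamma\subset SU(2)$ the binary group of the rational double point, the free $\mu_n$-action on $N_0\setminus\{0\}=(\CC^2\setminus\{0\})/\Gamma$ lifts to the universal cover $\CC^2\setminus\{0\}$; the lifted maps extend across the origin by Hartogs and, being of finite order, are linearizable, so they generate a finite group $H\subset GL(2,\CC)$ (conjugate into $U(2)$) sitting in an extension $1\to\Gamma\to H\to\mu_n\to 1$ and acting freely on $\CC^2\setminus\{0\}$. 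Hence $N_0/\mu_n\cong\CC^2/H$ is again an isolated quotient singularity.

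Next I would transport the smoothing to the quotient. For small $t\neq 0$, $N_t$ is the Milnor fiber of the rational double point and is therefore simply connected, and $\mu_n$ acts freely on it since its only possible fixed point lies over $0\in N_0$. Thus $N_t/\mu_n$ is smooth with $\pi_1=\ZZ_n$, and $\mathcal N/\mu_n\to\Delta$ is a one-parameter smoothing of $\CC^2/H$. As $\mathcal N\subset\CC^3\times\Delta$ is a hypersurface it is Gorenstein, and since $\mu_n$ acts freely in codimension one on $\mathcal N$ the quotient $\mathcal N/\mu_n$ is $\QQ$-Gorenstein; so this is a $\QQ$-Gorenstein smoothing. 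Theorem \ref{deform} then forces $\CC^2/H$ to be either a rational double point or a cyclic singularity $\frac1{dn^2}(1,dnm-1)$. The former is impossible for $n\geq 2$, because the smoothing $N_t/\mu_n$ has $\pi_1=\ZZ_n\neq 1$ while a rational double point has a simply connected smoothing. Therefore $H$, and hence $\Gamma$, is cyclic, so $(N_0,0)$ is of type $A_{dn-1}=\{xy=z^{dn}\}$ with $dn=|\Gamma|$ and $|H|=dn^2$, and $m$ is coprime to $n$.

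It then remains to identify the action and the family explicitly, which is where I would use Catanese's description \cite{cat} of $\Aut(A_{dn-1})$. Freeness of $\mu_n$ on the two branches $\{y=z=0\}$ and $\{x=z=0\}$ forces the weights of a generator $\xi$ on $x$ and $y$ to be a primitive $n$-th root of unity and its inverse, and invariance of $xy=z^{dn}$ forces the weight on $z$ to be $\xi^m$, giving $\xi(x,y,z)=(\xi x,\xi^{-1}y,\xi^m z)$ as in Proposition \ref{unsm}. Computing the induced action on the base of the miniversal deformation $xy=z^{dn}+\sum_{i=1}^{dn}e_iz^{dn-i}$ yields $e_i\mapsto\xi^{mi}e_i$, whose invariant coordinates are exactly $e_n,e_{2n},\dots,e_{dn}$ since $\gcd(m,n)=1$. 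A $\mu_n$-equivariant one-parameter deformation is classified by a map into this invariant subspace, hence is a pullback of the family $\mm'$ of Proposition \ref{unsm}, and its quotient is a pullback of $\phi:\nn\to\CC^d$.

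The main obstacle is the reduction of the first two paragraphs: linearizing the lifted action and, above all, checking that $\mathcal N/\mu_n$ is $\QQ$-Gorenstein so that Theorem \ref{deform} genuinely applies to $\CC^2/H$. Once $(N_0,0)$ is known to be of $A$-type, the identification of the action and of the family is a direct computation dictated by Catanese's classification.
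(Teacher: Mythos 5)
First, a point of comparison: the paper does not prove Proposition \ref{defman} at all --- it quotes it from Manetti \cite{man}, whose argument runs through Catanese's normal forms \cite{cat} for automorphisms of rational double points, i.e.\ essentially the computation you sketch in your final paragraph, but carried out for \emph{arbitrary} invariant deformations. Your first two paragraphs try to replace that analysis by a reduction to Theorem \ref{deform}, and this is where there is a genuine gap: the hypothesis of the proposition is only that $(\mathcal{N},0)\to\Delta$ is a $\mu_n$-invariant \emph{deformation}, not a smoothing, and freeness of $\mu_n$ on $\mathcal{N}\setminus\{0\}$ does not force the fibers $N_t$, $t\neq 0$, to be smooth. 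Concretely, take $N_0=\{xy=z^{2n}\}$ (type $A_{2n-1}$), the action $\xi\cdot(x,y,z)=(\xi x,\xi^{-1}y,\xi^{m}z)$ with $(m,n)=1$, and
\[
\mathcal{N}=\{xy=(z^{n}-t)^{2}\}\subset\CC^{3}\times\Delta .
\]
The total space is invariant, and for $0<k<n$ the fixed locus of $\xi^{k}$ in $\CC^{3}\times\Delta$ is $\{x=y=z=0\}$, which meets $\mathcal{N}$ only at the origin (a point $(0,0,0,t)\in\mathcal{N}$ forces $t^{2}=0$); so the action on $\mathcal{N}\setminus\{0\}$ is free. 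Yet every fiber $N_t$, $t\neq 0$, has $n$ singular points of type $A_1$ at $x=y=0$, $z^{n}=t$, permuted freely by $\mu_n$. For such a family your second paragraph collapses: $N_t$ is not the Milnor fiber, $N_t/\mu_n$ is not smooth, $\mathcal{N}/\mu_n\to\Delta$ is not a smoothing of $\CC^2/H$, and Theorem \ref{deform} --- which classifies singularities admitting $\QQ$-Gorenstein \emph{smoothings} --- cannot be applied. This is not a peripheral case: in the proof of Theorem \ref{main-class} the paper invokes Proposition \ref{defman} precisely when the fiber $M^{s}$ is singular (when $M$ contains exceptional $(-2)$-curves), and uses Theorem \ref{deform} directly otherwise; so the case your argument covers is exactly the one for which the proposition is not needed.

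Under the extra hypothesis that the family is a smoothing, your argument is correct and is an attractive shortcut, modulo two points you should make explicit: to conclude $|\Gamma|=dn$, $|H|=dn^{2}$ you need that the Milnor fiber of a $\QQ$-Gorenstein smoothing of $\frac{1}{DN^{2}}(1,DNm-1)$ has fundamental group of order \emph{exactly} $N$ (not merely nontrivial cyclic; this follows from Proposition \ref{unsm}), and in the last paragraph the relation between the weights on $x$ and $y$ comes from invariance of $xy-z^{dn}$ together with $\gamma^{n}=1$, while coprimality of $m$ and $n$ requires a freeness argument along $\{x=y=0\}\cap\mathcal{N}$, not just invariance. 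To prove the proposition as stated you cannot take the $A$-type conclusion from the fundamental group of a smooth fiber; you must obtain it as Manetti does, by using Catanese's classification to put the $\mu_n$-action on the rational double point itself into normal form, checking that $D$ and $E$ singularities admit no such free action on any invariant deformation, and then running your equivariant-versality computation (which is the correct endgame) for $A$-type with possibly singular fibers.
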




\section{ALE Ricci-flat K\"ahler metrics via the twistor space}\label{twistor}


On the minimal resolution of a rational double point singularity, ALE hyperk\"ahler  metrics (in particular Ricci-flat K\"ahler metrics) where constructed by Hitchin \cite{hit}, Kronheimer  \cite{kron1,kron2}, and others. In this section we show how their results can be rewritten in the set-up of Theorem \ref{main-class} and we analyze in detail the deformations of the second type of singularities from Theorem \ref{deform}. As we observed in Proposition \ref{unsm}, these singularities and their deformations are closely related to the $A-$type singularities. We start first by reviewing Hitchin's construction of hyperk\"ahler metrics on deformations of these orbifolds. This is done via the twistor space construction. 

The Penrose twistor correspondence \cite{ahs} is a one-to-one correspondence between half-conformally flat $4-$manifolds and complex $3-$folds, the twistor spaces, which satisfy certain properties. 
We will give a short summary of the twistor space of a hyperk\"ahler manifold $M$, by following Hitchin's exposition in \cite{hit}.  The hyperk\"ahler metrics are anti-self-dual if $M$ is endowed with the orientation induced by any of the complex structures.
In our particular case, we have the following version of the Penrose correspondence between  a simply connected manifold $M$ endowed with a hyperk\"ahler metric $g$ and the complex $3-$fold $Z$ which has the following properties:
\begin{enumerate}
\item   $Z$ is the total space of a holomorphic fibration $\pi:Z\to \bcp^1. $
\item   There exists a $4-$parameter family of sections of $\pi,$ each with normal bundle $\OO(1)\oplus \OO(1).$
\item	 There exists a non-vanishing holomorphic section $s$ of $K_Z\otimes\pi^*\OO(4)$.
\item There exists a real structure, i.e. a free anti-holomorphic involution, on $Z$ which is the extension of the antipodal map on $\bcp^1$, such that $\pi$ and $s$ are real, and $Z$ is fibered by the real sections of the family.
\end{enumerate}
  
  The space of real sections recovers the manifold $M$ and the conformal class of the metric. We use property $(3)$ to fix a metric in the conformal class.  Property $(1)$ is specific for hyperk\"ahler manifolds.
  For each $[u_1:u_2]\in \bcp^1,$ the projection from $Z$ to the space of real sections identifies $\pi^{-1}([u_1:u_2])$ with $M,$ which induces a complex structure on $M$ such that the metric $g$ is K\"ahler. 

  We give now a short account of the twistor space associated to a smoothing of an $A_{k-1}-$type singularity with a Ricci-flat K\"ahler metric, by following Hitchin's presentation in \cite{hit}. 
  A smoothing of an $A_{k-1}-$singularity is biholomorphic to $M=(xy=z^k+e_1z^{k-1}+\dots+e_k),$ for some constants $e_i.$ In order to construct $Z$ we need a family of complex structures on $M$, parametrized by $\bcp^1.$ Hitchin \cite{hit} considers the manifold $\tilde{Z}$
\begin{equation}\label{twist}
  xy=z^k+e_1(u)z^{k-1}+\dots+e_k(u)
\end{equation}
  as a first approximation,
  where $u$ is an affine coordinate and $e_i(u)$ are holomorphic functions. As we need to define a $\bcp^1-$family of complex structures, we need to consider the extension of the coordinate $u$ to homogeneous coordinates $[u_1:u_2]$ on $\bcp^1,$ then  $e_i(u)$ are going to be extended to sections of holomorphic line bundles on $\bcp^1,$ while $x,y,z$ are local coordinates on certain line bundles. Using the properties $(1-4)$ Hitchin  shows that $x,y,z$ need to be local coordinates on $\OO(k), \OO(k), \OO(2)$, respectively, while each $e_i$ is a section of $\OO(2i).$
  Hence $\tilde{Z}$ is a hypersurface in the vector bundle 
  $$\tilde{Z}\subset \OO(k)\oplus\OO(k)\oplus\OO(2)\to \bcp^1.$$ 
  The hypersurface $\tilde{Z}$ has at most rational double point singularities, which can be resolved \cite{hit} to obtain a manifold $Z$ which is the twistor space of $M$. Moreover, all the structures defined on $\tilde{Z}$ (properties 1-4) lift to corresponding structures on $Z.$ 

   On  $\tilde Z,$ Hitchin constructs a natural  real structure $\tau$ induced from a real structure on $\OO(k)\oplus\OO(k)\oplus\OO(2)\to \bcp^1$ (property $4$, and a non-vanishing holomorphic section $s$ (property 3).   
The last ingredient missing is a $4-$parameter family of sections. First, we make the assumption, \cite{hit}, that the equation \ref{twist}  factorizes as:
\begin{equation}\label{tw-fact}
 xy=\prod_{i=1}^k(z-p_i)
\end{equation}
where $p_i\in\Gamma(\OO(2)).$ 
As a function on the affine parameter $u=\frac{u_1}{u_2}\in U\equiv\CC\subset\bcp^1$ the polynomials $p_i$   can be written as   $p_i(u)=a_i u^2+2b_i u+c_i.$
 Let $(x(u), y(u), z(u))$ be a section of $\pi:Z\to \bcp^1,$ and let $z(u))=a u^2+2b u+c.$ 
 If we require that $Z$ and $(x(u), y(u), z(u))$ are  $\tau-$invariant, then we need to consider $p_i(u)$ satisfying $p_i(\tau u)=-\tau (p_i(u))$, or equivalently $c_i=-\bar{a}_i, b_i=\bar{b}_i, $ and  $c=-\bar{a}, b=\bar{b}.$ We can solve the equation \ref{tw-fact} by a simple factorization:
 $$x(u)=A\prod_{i=1}^k (u-\frac{(b-b_i)-\Delta_i}{(a-a_i)}), y(u)=B\prod_{i=1}^k(u-\frac{(b-b_i)+\Delta_i}{(a-a_i)})$$
 where $\Delta_i=\sqrt{(b-b_i)^2+|a-a_i|^2}$ is the discriminant of $z-p_i$ and $A,B$ are constant coefficients which must satisfy $AB=\prod (a-a_i).$ The reality condition on the polynomials $x(u)$ and  $y(u)$ implies that 
 \begin{equation}\label{ang}
 A\bar{A}=\prod_{i=1}^k((b-b_i)+\Delta_i).
  \end{equation}
  
 By Penrose correspondence the subspace of real sections $M_{re}$, can be identified with $M$ endowed with a conformal structure.  As we saw, the real sections are parametrized by $(b, Im (a), Re (a))\in \RR^3$ and the angular coordinate associated to $A$. Moreover, each conformal class is determined by a choice of the real quadratic polynomials $p_i$, or equivalently $k$ points $x_i=(b_i, Im(a_i), Re(a_i))$ in $\RR^3.$ 
   On this real submanifold, the conformal structure is given by \cite[eq.4.4]{hit}:
   \begin{equation}\label{conf-met}
   \gamma^2(db^2+dad\bar{a})+\big(Im(\frac{2dA}A -\delta da)\big)^2=
   \gamma^2dad\overline{a}+(\frac{2dA}A-\delta da)(\frac{2d\overline{A}}{\overline{A}}-\overline{\delta} d\overline{a})
   \end{equation}
where 
   \begin{align}
 & \gamma =\sum_{i=1}^k((b-b_i)^2+|a-a_i|^2)^{-\frac12}=\sum \Delta_i^{-1}\notag \\
  &\delta= \sum_{i=1}^k \frac{-(b-b_i)+\Delta_i}{\Delta_i(a-a_i)} \notag
  \end{align}

  To recover the metric on $M$, first we assume that $(M, J)$ is identified with the smooth hypersurface corresponding to 
  $\pi^{-1}(0),$ i.e. $M=(xy-\prod(z+\bar{a}_i))$ where $a_i\neq a_j,$ for $i\neq j.$ 
  We have  local coordinates $(y,z)$ whenever $y\neq0.$ 
  In terms of these local coordinates we have the following identifications:
  \begin{equation}\label{local}
  u=0,~~z=-\bar{a},~~y=\frac1A\prod_{i=1}^k((b-b_i)+\Delta_i)=\bar{A}
  \end{equation}
  In these local coordinates, the metric corresponding to the twistor space data is the following \cite{hit}:
  \begin{equation}\label{metric}
 g= \gamma dzd\bar{z}+\gamma^{-1}(\frac{2dy}y+\bar{\delta}dz)(\frac{2d\bar{y}}{\bar{y}}+\delta d\bar{z})
  \end{equation}
  where
  \begin{align}
 & \gamma =\sum_{i=1}^k((b-b_i)^2+|\bar{z}+a_i|^2)^{-\frac12}=\sum_{i=1}^k \Delta_i^{-1}\notag \\
  &\delta= \sum_{i=1}^k \frac{(b-b_i)-\Delta_i}{\Delta_i(\bar{z}+a_i)} \notag
  \end{align}  
  and $b$ is defined implicitly by
  \begin{equation}\label{implic}
  \displaystyle \prod_{i=1}^{k} ((b-b_i)+\Delta_i)=y\bar{y}.
  \end{equation}
  
 An easy computation shows that the metric extends smoothly at $y=0,$ and that 
 the above metric is asymptotically locally Euclidean and complete.
 
  The K\"ahler form associated to the metric is given by: 
  $$\omega_g=i \gamma dz\wedge d\bar{z}+i\gamma^{-1}(\frac{2dy}y+\bar{\delta}dz)\wedge(\frac{2d\bar{y}}{\bar{y}}+\delta d\bar{z}).$$
  
   Moreover, Hitchin shows that    we can choose a distinguished basis $\gamma_1, \dots,\gamma_{k-1}$, of $H_2(M, \ZZ) $ and then the K\"ahler class of $\omega$ is given by the Poincar\'e dual of $\sum8\pi(b_i-b_{i+1})\gamma_i\in H_2(M,\RR)$.

The condition that $a_i\neq a_j$ for $i\neq j,$ is equivalent to the fact the complex surface $(M,J)=\pi^{-1}(0)$ is smooth. If this condition is not satisfied then both $M$ and $\tilde Z$ have singularities above $0$, which Hitchin shows that can be resolved by introducing exceptional divisors. If the corresponding $b_i$'s are distinct then there are corresponding second homology classes $\gamma_i$ which can be represented by exceptional $(-2)-$ rational curves. Equal corresponding $b_i$'s will yield an orbifold ALE Ricci-flat K\"ahler space $(M, J, g).$

We  conclude that the metric $g$ is uniquely determined by the complex constants $a_i$ which are given by the complex structure $(M,J)$ and the real numbers $(b_i-b_{i+1}), i=1,\dots,k-1,$ which determine the K\"ahler class of $\omega_g.$ The Euclidean translation along the $b-$axis induces equivalent metrics. We have the following reformulation of Hitchin's result:

\begin{thm}[Hitchin \cite{hit}]\label{ex-A}
Let $(M_0, 0)$ be a quotient singularity of type $A_{k-1}.$ Let $(M_t, J_t)$ be the fiber of a $\QQ-$Gorenstein deformation of $(M_0,0),$ and  $(M,J) $ its minimal resolution.
For any such $(M, J)$   and any K\"ahler class $\Omega\in H^2(M,\RR)$ there exists a unique ALE Ricci-flat K\"ahler metric $g$ such that its K\"ahler form $\omega_g$ is in the class $\Omega.$
\end{thm}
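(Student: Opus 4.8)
The plan is to obtain Theorem \ref{ex-A} by inverting the explicit twistor construction carried out above. That construction produces, from any configuration of $k$ points $x_i=(b_i,Im(a_i),Re(a_i))\in\RR^3$, an ALE Ricci-flat K\"ahler metric \eqref{metric} on a fixed underlying $4-$manifold, whose complex structure is recorded by the horizontal data $a_i$ and whose K\"ahler class is recorded by the vertical data $b_i$; existence is then the statement that the pair $(J,\Omega)$ can be realized by such a configuration. Given the minimal resolution $(M,J)$ of a deformation fiber $M_t=(xy=z^k+e_1z^{k-1}+\dots+e_k)$, I factor the defining polynomial as $\prod_{i=1}^k(z-c_i)$ over $\CC$; comparison with the normalization \eqref{local} at $u=0$, where $M=(xy=\prod_i(z+\bar a_i))$, forces $c_i=-\bar a_i$ and thereby determines the pairs $(Re(a_i),Im(a_i))$ from $J$ alone. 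For the K\"ahler class I use that in the distinguished basis one has $[\omega_g]=\mathrm{PD}\big(\sum_{i=1}^{k-1}8\pi(b_i-b_{i+1})\gamma_i\big)$; since $\{\gamma_i\}$ is nondegenerate for the intersection pairing, prescribing $\Omega$ fixes each coefficient $b_i-b_{i+1}$, and hence all the $b_i$ up to the common additive constant already identified above with an isometry. Positivity of $\Omega$ on effective curves is exactly what guarantees that the resulting $x_i$ are pairwise distinct, the only case needing attention being $a_i=a_{i+1}$, where $\gamma_i$ is a holomorphic $(-2)-$curve and K\"ahlerness gives $b_i\neq b_{i+1}$. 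Feeding this configuration into \eqref{metric} yields an ALE Ricci-flat K\"ahler metric $g$ on $(M,J)$ with $[\omega_g]=\Omega$.

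The harder half is uniqueness, and this is where I expect the real work. Let $g'$ be another ALE Ricci-flat K\"ahler metric on $(M,J)$ with $[\omega_{g'}]=\Omega$. Since $M_t$ smooths an $A-$type point, its canonical bundle is trivial and $(M,J)$ is an ALE Calabi-Yau surface carrying a holomorphic $2-$form $\eta$; the asymptotic model $\CC^2/\ZZ_k$ of Definition \ref{ALE} normalizes $\eta$ and thereby fixes the constant $c$ for which every Ricci-flat K\"ahler form obeys $\omega^2=c\,\eta\wedge\bar\eta$, so that $\omega_{g'}^2=\omega_g^2$. By the $i\partial\bar\partial-$lemma in the ALE category I can write $\omega_{g'}-\omega_g=i\partial\bar\partial\phi$ with $\phi\to 0$ at infinity, and the equality of volume forms becomes the homogeneous relation $i\partial\bar\partial\phi\wedge(\omega_{g'}+\omega_g)=0$. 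The main obstacle is to force $\phi$ to be constant from this identity: the clean route is a maximum principle, or an integration by parts in which the decay of $\phi$ dictated by the ALE condition discards the boundary terms at infinity, yielding $\phi\equiv 0$ after normalization and hence $\omega_{g'}=\omega_g$. This step is precisely the general uniqueness of ALE Ricci-flat K\"ahler metrics in a prescribed K\"ahler class, which I would either invoke or supply by exactly this argument.

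Finally, one consistency point underlies the whole reduction: every $(M,J)$ allowed in the statement must genuinely arise as the fiber $\pi^{-1}(0)$ of a twistor family of the shape \eqref{twist}--\eqref{tw-fact}. This is supplied by the $A-$type deformation theory recalled in Section \ref{def-thm}; by Proposition \ref{unsm} every $\QQ-$Gorenstein deformation of the $A_{k-1}$ point has fibers of the form $xy=z^k+\dots+e_k$, so the factorization \eqref{tw-fact} and the matching $c_i=-\bar a_i$ are always available, and the minimal resolution contributes exactly the cycles $\gamma_i$ used to encode the K\"ahler class. With this in hand the existence and uniqueness halves combine to give the theorem.
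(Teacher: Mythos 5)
Your proposal is correct in substance, and its existence half is essentially the paper's own argument: Section \ref{twistor} is precisely this inversion of Hitchin's construction, reading the $a_i$ off the roots of the defining polynomial (via the reality condition $c_i=-\bar a_i$ and the identification \eqref{local}) and the differences $b_i-b_{i+1}$ off the K\"ahler class, with translation along the $b$-axis acting by isometries. One small correction there: to ensure the $x_i$ are pairwise distinct you must use positivity of $\Omega$ on \emph{all} effective chains $\gamma_i+\gamma_{i+1}+\dots+\gamma_j$ inside a group of equal $a_i$'s, not only on the adjacent classes $\gamma_i$; knowing $b_i\neq b_{i+1}$ alone still allows $b_i=b_{i+2}$ within a chain, which would produce an orbifold point rather than a smooth metric on the minimal resolution. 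Where you genuinely depart from the paper is uniqueness. The paper proves no analytic statement: it inherits uniqueness from the twistor-theoretic classification of Hitchin \cite{hit} (completed by Kronheimer's Torelli-type theorem \cite{kron2}), namely that every ALE Ricci-flat K\"ahler surface asymptotic to $\CC^2/\ZZ_k$ has a twistor space of the form \eqref{twist}, hence already lies in the family \eqref{metric}, within which the parameters $a_i$ and $b_i-b_{i+1}$ determine the metric. You instead run the Calabi--Yau argument: triviality of $K_M$, the forced normalization $\omega^2=c\,\eta\wedge\bar\eta$ from Ricci-flatness plus the ALE asymptotics, an $i\partial\bar\partial$-lemma with decay, and a maximum principle. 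That route is valid and in some ways preferable---it is self-contained modulo weighted analysis, and it is the same mechanism that underlies the uniqueness clauses of Theorems \ref{main-A} and \ref{main-class}---but the two steps you defer are the real content: (i) the two metrics are a priori ALE with respect to possibly different diffeomorphisms at infinity, so before $\omega_{g'}-\omega_g$ can be said to decay you must align the two charts up to a Euclidean motion commuting with the $\ZZ_k$-action; (ii) the $i\partial\bar\partial$-lemma with decay on an ALE end is not formal and requires weighted elliptic theory. The paper's citation of Hitchin/Kronheimer packages exactly these difficulties, whereas your argument trades that citation for analysis that must be supplied or cited separately; either way the theorem stands.
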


\begin{rmk}\label{exist}
We have argued, by giving full details, that Theorem \ref{main-A} is true for $A-$type singularities, but if we consider Kronheimer's results in \cite{kron1, kron2} then the above statement is true for any rational double point singularities.
\end{rmk}

We return to  the case when $k=dn$ and consider the singular manifold $M_0=(xy=z^{dn})$ and a smoothing of the form $M=M_{d,n}=(xy=z^{dn}-\sum_{i=1}^d e_i z^{n(d-i)}),  e_i\in\CC.$ This deformation is $\ZZ_n\cong\Gamma_{dn^2}/\Gamma_{dn}-$invariant,  where the group $\ZZ_n\cong\{\rho | \rho\in\CC,~ \rho^n=1\}$ acts by:
\begin{equation}\label{act}
\rho (x,y,z)=(\rho  x,\rho^{-1}y, \rho^m z), ~\text{where} ~(m,n)=1
\end{equation}
We would like to find which of the Ricci-flat K\"ahler metrics  constructed above  on $M$ are also $\ZZ_n-$invariant, and for this
we need to extend the action to the twistor space associated to $M.$
 
 First, we  study the special case of $d=1,$ and later discuss the situation of arbitrary $d.$
For $d=1,$ after a change in coordinates, we can assume $M$ to be given by the equation $(xy=z^n-1).$ This equation can be factored as $xy= \prod_{i=1}^n(z-\rho^{i}),$ where $\rho=e^{\sqrt{-1}\frac{2\pi }n}.$ Hence, if we are looking for the twistor space associated to $M,$ where the fixed complex structure is obtained at $u=0$, then we are looking for a twistor space which has a first approximation $\tilde{Z}\subset \OO_{\bcp^1}(n)\oplus\OO_{\bcp^1}(n)\oplus\OO_{\bcp^1}(2),$ defined by $xy=\displaystyle\prod_{i=1}^n(z-p_i(u)),$ where $p_i(u)=-\rho^{-i} u^2+2b_iu+\rho^{i}.$ Next we need to extend the $\ZZ_n-$action \ref{act} to $\OO(n)\oplus\OO(n)\oplus\OO(2)\to \bcp^1,$ such that $\tilde{Z}$ is invariant.
Let $\rho *u$ denote the action on the affine coordinate. The equation  $xy= \prod_{i=1}^n(z-p_i(u)),$ becomes under the action of $\rho\in\ZZ_n:$ 
$$\rho x\rho^{-1}y= \prod_{i=1}^n(\rho^m z-(-\rho^{-i}(\rho*u)^2+2b_i(\rho*u)+\rho^{i})),$$ 
or  equivalently
$$xy=\prod_{i=1}^n\big(z-(-\rho^{-i-m}(\rho*u)^2+2b_i\rho^{-m}(\rho*u)+\rho^{i-m})\big).$$
Requiring $\tilde{Z}$ to be invariant implies $p_{i-m}(u)=-\rho^{-i-m}(\rho*u)^2+2b_i\rho^{-m}(\rho*u)+\rho^{i-m},$ which gives the action on $u, \rho*u=\rho^m u$ and also $b_i=b_{i-m}.$ As $m$ and $n$ are relatively prime, we have that $b_1=b_2=\dots=b_n,$ and we can assume $b_i=0.$
This fixes the position of the points on  $\RR^3,$ as vertices of a regular polygon with $n$ sides,  in the plane $b=0$ centered at the origin. Moreover, using the identification \ref{local} of the local coordinates, we can identify the exact $\ZZ _n$ action on $M_{re}$ as follows:
\begin{equation}\label{act-m^c}
\rho(a)=\rho^{-m}a,~ \rho (A)=\rho A, ~ \text{and} ~\rho (b)=b
\end{equation}
To identify the action on the $b-$coordinate, we only need to observe that the defining implicit equation \ref{implic} is invariant under the action, and as $b$ is a real number, the action is trivial.

An easy computation shows that $\gamma$ is invariant under the $\ZZ_n-$action, while $\rho (\delta)=\rho^m \delta.$
Hence  $g$ is invariant under the free action of $\ZZ_n$ and induces a smooth, complete  ALE Ricci-flat  K\"ahler metric on $
N=N_{d,n}=M/\ZZ_n.$ The quotient metric is locally hyperk\"ahler, but not globally hyperk\"ahler, as $(N, g)$ is a deformation of the manifold $(N_0=\CC^2/\Gamma_{dn^2},\hat{g}_0),$ where $\Gamma_{dn^2}\subset U(2),$ but $\Gamma_{dn^2}$ is not a subgroup of $ SU(2),$ and  $\hat {g}_0$ is the canonical quotient metric on $\CC^2/\Gamma_{dn^2}.$  In terms of our construction of twistor spaces, this translates in the fact that the group  acts nontrivially on $u\in\bcp^1$ (the complex structures on $M$) as a rotation by a factor of $\rho^m$  fixing the initial complex structure $J,$ as well as $-J.$ 

In the case $d=1,$ the quotient manifold $N$ has Euler characteristic equal to $\frac {\chi(M)} n=\frac{(1+(n-1))} n=1$ and $N$ is a rational homology ball with infinity asymptotic to $\CC^2/\Gamma_{n^2}.$ Moreover, proposition \ref{unsm} tells us that there exists a unique smoothing $(N, J)$ up to biholomorphisms. We have showed:

\begin{prop}
The rational homology ball $(N, J),$ with fundamental group $\ZZ_n$ and infinity of the form $L(n^2, nm-1)\times(R,\infty),$ constructed above admits a unique ALE Ricci-flat K\"ahler metric.
\end{prop}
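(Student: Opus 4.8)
The plan is to obtain both halves of the statement from the $A$-type theory already in hand (Theorem \ref{ex-A} and Remark \ref{exist}) together with the covering $M\to N$. Existence needs no further work: the discussion preceding the statement produced a $\ZZ_n$-invariant ALE Ricci-flat K\"ahler metric on $M=M_{1,n}$ whose descent is a smooth complete ALE Ricci-flat K\"ahler metric $g$ on $N=M/\ZZ_n$. So the real content is \emph{uniqueness}, and I would argue it by lifting to the universal cover.

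First I would pin down the covering data. Since $M$ is the minimal resolution of a smoothing of the $A_{n-1}$ singularity it is simply connected, and $\ZZ_n$ acts freely and holomorphically, so $p\colon M\to N$ is the universal cover with $\pi_1(N)\cong\ZZ_n$. At infinity $p$ restricts to the $n$-fold cover $S^3/\Gamma_n\to S^3/\Gamma_{n^2}=L(n^2,nm-1)$ of links, so any geometry pulled back from $N$ is asymptotic to $\CC^2/\Gamma_n$, i.e.\ is ALE of $A_{n-1}$ type. In particular the constructed metric lifts to $\tilde g:=p^*g$, which is exactly the $\ZZ_n$-invariant Hitchin metric on $(M,J)$ with $b_1=\dots=b_n=0$; since its K\"ahler class is the Poincar\'e dual of $\sum 8\pi(b_i-b_{i+1})\gamma_i$, we have $[\tilde\omega]=0\in H^2(M,\RR)$.

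Now let $g'$ be an arbitrary ALE Ricci-flat K\"ahler metric on $(N,J)$ and set $\tilde g':=p^*g'$. As the pullback under a holomorphic covering, $\tilde g'$ is again ALE Ricci-flat K\"ahler on $(M,J)$, asymptotic to $\CC^2/\Gamma_n$ by the previous paragraph, hence it falls within the scope of Theorem \ref{ex-A}. (Equivalently: $M$ is simply connected and $\tilde g'$ is Ricci-flat K\"ahler in complex dimension two, so its holonomy lies in $SU(2)=Sp(1)$ and $\tilde g'$ is one of Kronheimer's hyperk\"ahler metrics.) Theorem \ref{ex-A} says that on $(M,J)$ the ALE Ricci-flat K\"ahler metric is uniquely determined by its K\"ahler class. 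The decisive point is that this class must vanish: because $N$ is a rational homology ball, $H^2(N,\RR)=0$, so $[\omega']=0$ and therefore $[\tilde\omega']=p^*[\omega']=0=[\tilde\omega]$. Two ALE Ricci-flat K\"ahler metrics on $(M,J)$ in the same class coincide by Theorem \ref{ex-A}, so $\tilde g'=\tilde g$, and pushing down to the quotient gives $g'=g$.

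The step I expect to require the most care is applying Theorem \ref{ex-A} in the degenerate class $[\omega]=0$: one must confirm that the zero class is admissible. This holds precisely because the complex structure is smooth, the $a_i=-\rho^{-i}$ being distinct, so the hyperk\"ahler period triple is nonzero even though its K\"ahler component vanishes; equivalently the classes $\gamma_i$ are $J$-Lagrangian spheres of zero symplectic area, and the smoothness of $\tilde g$ is governed by the complex deformation parameters rather than by the K\"ahler class. Once this is granted, uniqueness is a formal consequence of Theorem \ref{ex-A} and the vanishing of $H^2(N,\RR)$.
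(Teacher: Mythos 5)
Your proposal is correct and takes essentially the same route as the paper: existence is the descent of the $\ZZ_n$-invariant Hitchin metric constructed in Section \ref{twistor}, and uniqueness comes from lifting an arbitrary ALE Ricci-flat K\"ahler metric on $N$ to the simply connected cover, where it is hyperk\"ahler and hence governed by Theorem \ref{ex-A}. The only (harmless) variation is how the lifted K\"ahler class is pinned down: you use $H^2(N,\RR)=0$ to conclude it vanishes, while the paper argues that a $\ZZ_n$-invariant class forces $b_1=\dots=b_n$, which for $d=1$ yields the same conclusion $[\tilde\omega]=0$.
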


Given an arbitrary $d,$ for the defining polynomial of $M,$ 
$xy=z^{dn}-\displaystyle\sum_{i=1}^d d_i z^{n(d-i)},$ we can consider the following factorization  $ xy=\displaystyle\prod_{i=1}^d (z^n-d'_i)=\prod_{i=1}^d(z^n-c_i^n)$ where $c_i^n=d'_i$ and $c_i$ has the smallest possible argument. 
As we assume that $M$ is smooth, then the sets $\{c_i^n=d'_i\}_i$ and  $\{c_i\}_i$ contain distinct numbers. The defining polynomial decomposes furthermore as:
\begin{equation}\label{gendef}
 xy=\prod_{i=1}^d\prod_{j=1}^n(z-c_i\rho^{j})
 \end{equation}
In particular, we have $a_{i,j}=-\bar c_i\rho^{-j}, i=1,\dots, d, j=1,\dots n,$.

The same arguments, as in the case $d=1,$ give us the position of the $dn$ points $(b_{i,j}, Im(a_{i,j}), Re(a_{i,j}))$ 
 in $\RR^3,$  as the vertices  of $d$ regular polygons with $n$ sides, each polygon in a horizontal plane  $b_{i,j}=b_i=$constant, $i=1,\dots, d$, centered at the origin of the plane $(b_i, 0, 0)$ and with one of the vertices at $(b_i,-\bar c_i)\in\RR\times\CC\cong\RR^3.$ 
 The $\ZZ_n-$actions on $\tilde Z$ and $M_{re}$ are the same as in the simple case \ref{act-m^c}. 
 
The group action rearranges the terms in the formula of $\gamma$ and after factorizing out $\rho^m$ we obtain a similar formula for $\delta,$ i.e. $\gamma$ is invariant under the $\ZZ_n$ action, while $\rho (\delta)=\rho^{m} \delta.$ Hence, the Hitchin's metrics, given by the equation \ref{metric},   are $\ZZ_n-$invariant for the above choice of points. The fact that the $b-$coordinates of the vertices satisfy the conditions $b_{i,j}=b_i$ translates to the fact that the K\"ahler class of the metric is $\ZZ_n-$invariant. Moreover any invariant K\"ahler class must satisfy these conditions.

We are now ready to give the proof of the main theorem:
\begin{proof}[Proof ~of~Theorem \ref{main-A}]
From Theorem \ref{deform} we know that there are only two types of isolated quotient singularities which admit a $\QQ-$Gorenstein smoothing. For the first type of singularities the 
existence of ALE Ricci-flat  K\"ahler metric in any K\"ahler class was proved by Hitchin \cite{hit}, for the $A-$singularities, and by Kronheimer \cite{kron1,kron2} in general (according to the reformulation of their results in Theorem \ref{ex-A} and Remark \ref{exist}). 

In the case of a $\frac1{dn^2}(1, dnm-1)$ singularity, we saw that a smoothing $N_t$ of $\CC^2/\Gamma_{dn^2}$ is the  $\ZZ_n-$quotient of a $A_{dn-1}$ manifold, see Proposition \ref{unsm}. In this section we looked at the explicit description of Hitchin's metrics on these deformations and proved that the K\"ahler metric is also group invariant as long as it satisfies the condition that its K\"ahler class is invariant. 

As any K\"ahler class on $N$ can be pulled back to a $\ZZ_n-$invariant  K\"ahler class on a $A_{dn-1}-$manifold, which contains a unique ALE Ricci-flat  K\"ahler metric \cite{hit} (\ref{ex-A}), which we proved is group invariant, hence it induces the needed metric on $N$. 
\end{proof}

\section{Classification of ALE Ricci-flat  K\"ahler metrics on surfaces}\label{clasific}


The simply connected ALE Ricci-flat  K\"ahler surfaces are classified by Kronheimer \cite{kron2}. 
These are the ALE hyperk\"ahler  surfaces, and their metrics are explicitly described. 
When the manifolds are not simply connected, the author could not find such a classification in the literature. The present paper is intended to fill in this gap, by giving a complete description of the ALE  Ricci-flat  K\"ahler, with emphasis on the non-simply connected case. 

In this section we are going to use the Kronheimer's approach to classify the ALE Ricci-flat  K\"ahler surfaces $(M,J,g)$. We will see that the 4-manifold $M$ and the metric $g$ are determined by the  topology of the asymptotic end and the K\"ahler class of the metric, respectively. We prove this by starting with some easy lemmas.

\begin{lemma}\label{finite-fgr}
Let $(M,g)$ be an ALE Ricci-flat  manifold then the fundamental group of $M$ is finite.
\end{lemma}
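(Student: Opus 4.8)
The statement is that an ALE Ricci-flat manifold $(M,g)$ has finite fundamental group. My plan is to exploit the interplay between the asymptotic Euclidean geometry at infinity and the curvature condition, using a covering-space argument. First I would recall that, by the very definition of ALE, the end of $M$ is diffeomorphic to $(\CC^2/H)\setminus K$ for a finite group $H\subset U(2)$, so the fundamental group of the end is finite (it is a quotient of $H$, hence finite). The subtlety is that $\pi_1(M)$ need not coincide with $\pi_1$ of the end: the compact core $K$ can in principle contribute extra topology, and there could be loops in $M$ that are not detected at infinity. So the heart of the matter is to control $\pi_1(M)$ itself, not merely the group at infinity.

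The cleanest approach is via the universal cover. Let $p:\widetilde M\to M$ be the universal covering, and pull back the metric $g$ to a Ricci-flat metric $\tilde g$ on $\widetilde M$. Since $g$ is Ricci-flat it has nonnegative Ricci curvature, so the Bishop–Gromov volume comparison applies: geodesic balls in $(\widetilde M,\tilde g)$ grow at most Euclidean-ally, i.e. $\mathrm{vol}(B_R) \le C R^4$. On the other hand, the ALE condition forces $(M,g)$ itself to have exactly Euclidean volume growth, $\mathrm{vol}_M(B_R)\sim c R^4$ with $c>0$ the volume of the unit ball in $\CC^2/H$. If $\pi_1(M)$ were infinite, the deck group would be an infinite group acting freely on $\widetilde M$, and summing the volumes of a sequence of fundamental domains would force $\mathrm{vol}_{\widetilde M}(B_R)$ to grow strictly faster than the order-$|\deck|$-many copies permit, eventually violating the $CR^4$ Bishop–Gromov bound. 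Concretely, one compares the volume growth of $\widetilde M$ (which must still be at most quartic by Bishop–Gromov) against the number of disjoint translates of a fixed ball that an infinite deck group produces, and derives a contradiction. This is the standard argument that a complete Ricci-flat (indeed nonnegative Ricci) ALE manifold has at most polynomial—hence finite—fundamental group.

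The step I expect to be the main obstacle is making the volume-counting argument rigorous, since the deck transformations are isometries but need not be translations, and one must rule out the possibility that the fundamental domains shrink or accumulate in a way that keeps the total volume growth Euclidean. The correct tool is a theorem of the form: a complete Riemannian manifold of nonnegative Ricci curvature and Euclidean volume growth has finite fundamental group (this follows from work of Anderson, and also from Li's results on the fundamental groups of manifolds with nonnegative Ricci curvature). I would invoke Anderson's theorem directly: for a complete manifold with $\mathrm{Ric}\ge 0$, the order of $\pi_1$ is bounded by the reciprocal of the asymptotic volume ratio $\lim_{R\to\infty}\mathrm{vol}(B_R)/(\omega_4 R^4)$, and since the ALE condition guarantees this ratio is a positive number $1/|H|>0$, the group $\pi_1(M)$ is finite with $|\pi_1(M)|\le |H|$. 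This not only proves finiteness but foreshadows the sharper structural claim in Theorem \ref{main-class} that $\pi_1(M)$ is cyclic, embedding into $H$. The one point requiring care is verifying that the ALE decay rate in Definition \ref{ALE} is strong enough to guarantee the asymptotic volume ratio is exactly $1/|H|$ and in particular nonzero, but the $O(r^{-4})$ decay of $g-h_0$ makes this immediate.
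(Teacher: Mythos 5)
Your proposal is correct and follows essentially the same route as the paper: the paper's proof also applies the volume comparison theorem to the Ricci-flat universal cover and combines it with the Euclidean volume growth forced by the ALE condition to conclude the cover is a finite covering, hence $\pi_1(M)$ is finite. Your invocation of Anderson's theorem (bounding $|\pi_1(M)|$ by the reciprocal of the asymptotic volume ratio) simply makes rigorous the volume-counting step that the paper leaves as a sketch.
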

\begin{proof}
This  is an immediate consequence of the fact that $(M,g)$ is  asymptotically locally Euclidean  and the Volume Comparison Theorem on Ricci-flat manifolds. The Volume Comparison Theorem for the universal cover $(\wt M,\wt g)$ tells us that $(\wt M,\wt g)$ must have volume growth less than that of $\RR^4,$ and as $M$ is ALE we have that the universal cover must be a finite covering of $M$. Hence $\pi_1(M)$ is finite.
\end{proof}

\begin{lemma}\label{1-end}
Let $(M,g)$ be an ALE Ricci-flat $4-$manifold, then its  universal cover $\widetilde{M} $  has only one end, in particular is ALE.
\end{lemma}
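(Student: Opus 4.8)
The plan is to argue geometrically rather than topologically, using the Cheeger--Gromoll splitting theorem as the natural companion to the volume comparison argument already used in Lemma \ref{finite-fgr}. First I would lift the metric: equip the universal cover $\widetilde M$ with the pulled-back metric $\widetilde g$, so that the covering map becomes a local isometry and $(\widetilde M,\widetilde g)$ is complete and Ricci-flat. By Lemma \ref{finite-fgr} the covering $\widetilde M\to M$ has finite degree. Note that a purely topological count of ends via the image of $\pi_1(\mathrm{end})\to\pi_1(M)$ would require surjectivity of that map, which is not available at this stage, so I would instead exploit Ricci-flatness directly.

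The core step is a proof by contradiction. Suppose $\widetilde M$ had at least two ends. A complete manifold with $\mathrm{Ric}\geq 0$ and more than one end contains a line, so by the Cheeger--Gromoll splitting theorem $(\widetilde M,\widetilde g)$ splits isometrically as a Riemannian product $\RR\times N^3$. Since $\widetilde M$ is simply connected, $N$ is simply connected as well; the Ricci tensor of a product is a direct sum, so Ricci-flatness of $\widetilde M$ forces $N$ to be Ricci-flat, and in dimension three a Ricci-flat metric is flat because the full curvature tensor is determined by the Ricci tensor. A complete, simply connected, flat $3$-manifold is $\RR^3$, whence $\widetilde M=\RR^4$. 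But $\RR^4$ has exactly one end, contradicting the two-ended assumption. Hence $\widetilde M$ has a single end.

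To obtain the ``in particular is ALE'' conclusion, I would transfer the asymptotic model through the finite covering. By Definition \ref{ALE} the end of $M$ is diffeomorphic to $\{z\in\CC^2/H:r(z)>R\}$, which deformation retracts onto $S^3/H$. The restriction of the finite covering over this end is a finite cover which, having a single end upstairs, is connected, and is therefore of the form $\{r>R'\}\subset\CC^2/H'$ for some subgroup $H'\leq H$. Because the covering is a local isometry and the defining estimates $\pi_*(g)-h_0=O(r^{-4})$ and $\pi_*(J)-J_0=O(r^{-4})$ hold on the end of $M$, the corresponding decay holds for the pulled-back data on the end of $\widetilde M$ relative to the flat model on $\CC^2/H'$; thus $(\widetilde M,\widetilde g)$ is ALE asymptotic to $\CC^2/H'$.

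The main obstacle I anticipate lies in the splitting step: one must be careful to verify that the two-ended hypothesis genuinely yields a line so that the splitting theorem applies, that the three-dimensional factor is forced to be flat, and that the flat simply connected total space is indeed $\RR^4$ with a single end. The subsequent identification of the covered end with $\{r>R'\}\subset\CC^2/H'$ and the transfer of the $O(r^{-4})$ decay through the local isometry is then routine.
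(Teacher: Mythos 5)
Your proof is correct and follows essentially the same route as the paper: both derive a line from the assumption that $\widetilde M$ is disconnected at infinity, apply the Cheeger--Gromoll splitting theorem, use that a Ricci-flat $3$-manifold is flat and that a complete simply connected flat manifold is Euclidean, and conclude $\widetilde M=\RR^4$, which has a single end. Your final paragraph transferring the asymptotic model and the $O(r^{-4})$ decay through the finite covering to justify the ``in particular is ALE'' clause is a correct elaboration of a point the paper's proof leaves implicit.
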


\begin{proof}
If $\wt M$ is disconnected at infinity, then $(\wt M, \wt g)$ contains a line (\cite[Ch.9]{pet}), and as $Ric_{\wt g}\equiv0$ the Splitting Theorem tells us that  $(\wt M, \wt g)$ is isometric to a product $(N\times \RR, g_0+dt^2).$ In particular the $3-$dimensional manifold $N$ is Ricci-flat, and hence flat. But as $N$ is simply connected, we have $N$ is the Euclidean space, and so is $\wt M.$ In particular, $\wt M$ has only one end at infinity. 
\end{proof}

\begin{rmk}
 The lemma actually proves a stronger result. 
If the asymptotic  models of $M$ and $\wt M$ are those of $\RR^4/H$ and $\RR^4/\Gamma,$ with $H, \Gamma $  finite subgroups of $SO(4),$  and we denote the fundamental group $\pi_1(M)$ by $G,$ then $H$ is an extension of $\Gamma$ by $G:$
 $$0\to\Gamma\to H\to G\to0.$$
\end{rmk}
\begin{prop}\label{equiv-cl}
The universal cover $(\wt M, \wt J, \wt g)$ of an ALE Ricci-flat K\"ahler  manifold $( M, J, g)$ is an ALE hyperk\"ahler manifold, and $M$ is obtained by taking the quotient of $\wt M$ by a finite cyclic group of automorphisms.
\end{prop}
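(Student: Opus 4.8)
The plan is to transport all the structure to the universal cover, recognize it as hyperk\"ahler by a holonomy argument, and then read off cyclicity of the deck group from the way it acts on the holomorphic volume form.

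First I would fix the covering $p:\wt M\to M$. By Lemma \ref{finite-fgr} the group $G:=\pi_1(M)$ is finite, so $p$ is a finite covering; I equip $\wt M$ with the pulled-back data $\wt J=p^*J$ and $\wt g=p^*g$, which are again Ricci-flat K\"ahler since $p$ is a local isometry. The deck group $G$ then acts freely on $\wt M$ by $\wt g$-isometries that are $\wt J$-holomorphic (each deck transformation covers $\mathrm{id}_M$), so $G\subset\Aut(\wt M,\wt J)\cap\mathrm{Isom}(\wt M,\wt g)$ and $M=\wt M/G$. By Lemma \ref{1-end}, $\wt M$ has a single end and is ALE; moreover $\wt M$ is simply connected.

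To see that $\wt M$ is hyperk\"ahler I would argue by holonomy. A K\"ahler metric is Ricci-flat precisely when its restricted holonomy lies in $SU(2)$, and in complex dimension two $SU(2)=Sp(1)$. Since $\wt M$ is simply connected the restricted holonomy equals the full holonomy, so the holonomy-invariant complex volume form at a point extends to a global parallel holomorphic $(2,0)$-form $\Omega$. This $\Omega$ is nowhere vanishing, trivializing $K_{\wt M}$, and its real and imaginary parts furnish two further K\"ahler forms $\omega_J,\omega_K$ completing $\omega_{\wt g}$ to a hyperk\"ahler triple $(I=\wt J,J,K)$. Thus $(\wt M,\wt g)$ is an ALE hyperk\"ahler $4$-manifold, and by Kronheimer's classification \cite{kron2} it is the minimal resolution of some $\CC^2/\Gamma$ with $\Gamma\subset SU(2)$; in particular it carries a one-dimensional space $\CC\,\Omega$ of parallel $(2,0)$-forms.

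For the cyclicity I would study the induced $G$-action on this line. Every $g\in G$ preserves $\wt J$ and $\wt g$, hence the Chern connection, hence maps parallel $(2,0)$-forms to parallel $(2,0)$-forms; since $|\Omega|_{\wt g}$ is constant, this defines a homomorphism $\lambda:G\to U(1)$ with $g^*\Omega=\lambda(g)\Omega$. The key step is to show $\lambda$ is injective. If $g\in\ker\lambda$ then $g$ fixes $\Omega=\omega_J+i\omega_K$ and $\omega_{\wt g}$, so it is a triholomorphic isometry; as $g$ also acts freely, the quotient $\wt M/\langle g\rangle$ would be a smooth ALE hyperk\"ahler $4$-manifold with non-trivial fundamental group $\langle g\rangle$. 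This contradicts the fact that every ALE hyperk\"ahler $4$-manifold is simply connected (Kronheimer \cite{kron2}), so $\ker\lambda=\{1\}$ and $G$ embeds in $U(1)$. A finite subgroup of $U(1)$ is cyclic, which gives the claim.

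I expect the injectivity of $\lambda$ — equivalently, ruling out non-trivial triholomorphic deck transformations — to be the main obstacle, since the hyperk\"ahler reduction itself is essentially the standard $SU(2)=Sp(1)$ coincidence. The point to verify carefully is that $\wt M/\langle g\rangle$ is genuinely ALE: asymptotically $g$ acts by an element $g_\infty\in SU(2)$ normalizing the group $\Gamma$ at infinity, so the quotient is asymptotic to $\CC^2/\langle\Gamma,g_\infty\rangle$ with $\langle\Gamma,g_\infty\rangle\subset SU(2)$ finite and free on $S^3$, keeping us inside Kronheimer's class. Under the identification of $\lambda$ with the determinant $\det:H\to U(1)$ on the group $H\subset U(2)$ at infinity of $M$ (with $\Gamma\subset SU(2)$), this injectivity is exactly the statement $\Gamma=H\cap SU(2)$, matching the extension $0\to\Gamma\to H\to G\to0$ of the earlier remark.
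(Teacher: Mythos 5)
Your proof is correct, and its skeleton is the same as the paper's: pass to the universal cover, which is ALE by Lemma \ref{1-end} and hyperk\"ahler because it is simply connected Ricci-flat K\"ahler; then invoke Kronheimer's theorem that ALE hyperk\"ahler surfaces are simply connected to kill the subgroup of deck transformations acting trivially on the holomorphic $2$-form; finally embed $G$ into $U(1)$ to get cyclicity. The difference is in the bookkeeping, and it is worth recording. The paper works from infinity inward: it sets $\Gamma'=H\cap SU(2)$, forms the intermediate cover $M'=\wt M/(\Gamma'/\Gamma)$ of $M$, shows $M'$ is hyperk\"ahler because elements of $\Gamma'/\Gamma$ are represented by loops in $\CC^2/\Gamma'$ with $\Gamma'\subset SU(2)$, hence act trivially on $K_{\wt M}$, and deduces $M'=\wt M$, i.e.\ $\Gamma=\Gamma'$; cyclicity then follows from $G=H/\Gamma\hookrightarrow U(2)/SU(2)=S^1$. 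You work intrinsically on $\wt M$: the character $\lambda:G\to U(1)$ defined by the action on the parallel $(2,0)$-form $\Omega$, with $\ker\lambda$ shown trivial by applying Kronheimer to the quotient $\wt M/\langle g\rangle$. Under the identification you make at the end ($\lambda=\det$ on $H/\Gamma$, so $\ker\lambda=\Gamma'/\Gamma$), the two arguments are exactly dual: the paper's hyperk\"ahler intermediate cover $M'$ of $M$ is your hyperk\"ahler quotient of $\wt M$. What your version buys is that the action on $K_{\wt M}$ is computed directly from the parallel form rather than via loops in the asymptotic model (and you replace the paper's citation of Hitchin--Karlhede--Lindstr\"om--Ro\c{c}ek by the standard $SU(2)=Sp(1)$ holonomy argument, which is the same content); what it costs is that you must justify that $\wt M/\langle g\rangle$ is still ALE of $SU(2)$-type, which you correctly flag and sketch --- and which is no harder than what the paper leaves implicit in its remark on the extension $0\to\Gamma\to H\to G\to 0$. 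One small simplification available to you there: freeness of $\langle\Gamma,g_\infty\rangle$ on $S^3$ is automatic, since any element of a finite subgroup of $SU(2)$ with an eigenvalue $1$ is the identity.
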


\begin{proof}
Lemma \ref{1-end} tells us that the universal cover $(\wt M, \wt g)$ is ALE Ricci-flat K\"ahler manifold, and as $\wt M$ is simply connected, it is hyperk\"ahler \cite{hklr}. In particular, as the metrics are compatible with the complex structure, the asymptotics of $M, \wt M$ are given by $H, \Gamma$ finite subgroups of $U(2), SU(2),$ respectively.

Let $\Gamma'$ be the normal subgroup of $H$ given by $\Gamma'=H\cap SU(2).$ Then $\Gamma \subseteq\Gamma'.$ First we will show that the two subgroups are equal to each other. 
As $\Gamma'/\Gamma$ is a subgroup of $H/\Gamma=G=\pi_1(M)$, it has an associated covering 
$(M', g')\to (M, g),$ with $g'$ the pull-back metric.  As $\wt M$ is the universal cover of $M,$ we  have a covering 
$(\wt M, \wt g)\to (M', g').$ Moreover, $M'$  has one end at infinity and  the group of deck transformations
 is given by $\pi_1(M')=\Gamma'/\Gamma.$ As $\wt M$ is a hyperk\"ahker manifold, the manifold $M'$ will  be hyperk\"ahler if $\pi_1(M')$ acts trivially on the canonical bundle $K_{\wt M}.$ 
 But any element in $\pi_1(M')$ can be identified with an element in $\Gamma'/\Gamma,$ 
 and can be represented be a loop $\gamma$ in the asymptotic model, $\CC^2/\Gamma'.$ 
 As $\Gamma' \subset SU(2)$ the loop acts trivially on $K_{\wt M}.$ Hence $M'$ is also a hyperk\"ahler manifold. 
 But Kronheimer \cite{kron2} classified all ALE hyperk\"ahler manifolds, and showed that they are, in particular, simply connected. Hence $\wt M =M'$ and $\Gamma=\Gamma'.$ 

This implies that $H/\Gamma, (\Gamma=H\cap SU(2))$ is a subgroup of $U(2)/SU(2)=S^1.$ So, $G=H/\Gamma $ is a finite cyclic group.

 \end{proof}

\begin{rmk} In particular, if the manifold $M$ is not simply connected then it is not hyperk\"ahler, just locally hyperk\"ahler, as the group $G$ acts non-trivially on the canonical bundle $K_{\wt M}.$ Moreover, $M$ is a spin manifold iff the order of  $G$ is odd.
\end{rmk}

By Proposition \ref{equiv-cl}, classifying the non-simply connected ALE Ricci-flat K\"ahler  manifolds is equivalent to classifying the  finite cyclic {\it free} groups of isometries on  hyperk\"ahler $4-$manifolds. 


 In the remaining part of this section, we  prove that the manifold $\wt M$ admits a $G-$equivariant degeneration to the quotient orbifold $\CC^2/\Gamma.$ This will lead to an explicit description of $M$ and it will give us a complete classification of the ALE Ricci-flat K\"ahler manifolds. 
To do this we go back to Kronheimer's paper \cite{kron2} and 
study the $G-$action on the twistor space $Z_{\tilde M}.$

As we mentioned in the previous section, to any half-conformally flat (spin) manifold $(X, [g])$ we can associate a complex three-manifold $Z,$ its twistor space. If the manifold $(X, g)$ is scalar flat K\"ahler, then the metric is anti-self-dual (ASD), and its twistor space is the projective bundle $Z=\PP(V^+),$ where $V^+$ is the bundle of self-dual spinors. If we consider the manifold with the reversed orientation $\overline X$, then $(\overline X, g)$  is a self-dual manifold, and to it we associate the same twistor space $Z,$ which now is defined as $\PP(V^-), $ where $V^-$ is the bundle of anti-self-dual spinors on $(\ol X, g).$

Given an ALE Ricci-flat hyperk\"ahler manifold $(\wt M, \wt g),$ Kronheimer \cite{kron2} constructs  orbifold compactifications of both $\wt M$ and $Z_{\wt M} as follows:$ $\ol {\wt M}=\wt M\cup\{\infty\}$ and $ Z_{\ol{\wt M}}=Z_{\wt M}\cup\{\bcp^1/\Gamma\}$. Let $U'\cong(\RR^4\setminus B_r(0))/\Gamma$ be a neighborhood of infinity in $\wt M,$ such that $U'/G$ is a neighborhood of infinity in $M$ and let $p:\wt U'\to U'$ be the universal cover of $U'$. If $x_i$ are $H-$invariant local coordinates at infinity in $\wt U',$ then $\wt U=\wt U'\cup\{\infty\}$ is a smooth chart if we consider the coordinates $y_i=x_i/r^2.$ Moreover, if $\phi:\wt M\to \RR^+$ is a smooth $G-$invariant function, equal to $1/r^2$ outside some compact set, then the metric $\ol g=p^*(\phi^2\wt g|_{ U'})$ extends to a   class C$^3$ metric on $\wt U$ \cite{kron2}, which is $H-$invariant. Then $(\wt U,\ol g)$ defines Riemannian orbifold structures on the compactifications $\ol {\wt M}=\wt M\cup\{\infty\}$ and $ \ol M=M\cup\{\infty\}.$  The manifold $\ol M$ is obtained from $\ol {\wt M}$ by taking the quotient by the group $G,$ which acts freely on $\wt M$ and fixes $\infty$ on $\ol {\wt M}$.

We now construct a compactification of the twistor space $Z$ by defining the twistor space for an orbifold space.  As $\ol g$ is in the same conformal class as $\wt g,$ it is  ASD in $x_i$ coordinates. But if we consider $y_i$ coordinates, as the transition map is reversing the orientation, the metric is  be self-dual. To  leading order, $H$ acts linearly on both $x_i$ and $y_i$ coordinates, by the same representation $\rho_H:H\to U(2)$, and so does its subgroup $\Gamma.$ Moreover (see \cite[Lemma 2.1]{kron2}), in $y_i$ coordinates  the action of $\Gamma $ on $V^+$ is trivial, but non-trivial on $V^-.$  For the neighborhood $(\wt U, \ol g) $ of $\infty$ we consider the twistor space $Z_{\wt U}.$ As the metric $\ol g$ is both $H$ and $ \Gamma-$invariant, the actions of $H$ and $\Gamma$ extend to holomorphic actions on $Z_{\wt U}.$ Moreover, if $\wt l_\infty\subset Z_{\wt U}$ is the twistor line corresponding to the $\infty,$ then $H$ and $\Gamma$ act on $\wt l_\infty$ by the standard action of $U(2)$ on $\bcp^1$ \cite{kron2}. Let $l_\infty=\wt l_\infty/\Gamma.$ We define the compactifications of the twistor spaces to be $ Z_{\ol{\wt M}}=Z_{\wt M}\cup l_\infty$ and $Z_{\ol M}=Z_M\cup (\wt l_\infty/H),$ with the complex structures given by $\Gamma, H-$quotients of $Z_{\wt U}$ in the neighborhoods of the twistor lines at infinity. The complex $3-$fold $Z_{\ol M}$ is obtained as  the quotient $Z_{\ol{\wt M}}/G,$ where $G$ acts freely on $Z_{\wt M}$ and non-trivially on $ l_\infty.$

As $\wt M$ is  hyperk\"ahler, there is a holomorphic projection $\pi:Z_{\wt M} \to \bcp^1.$    The sphere $\bcp^1$ gives a $S^2-$family of  complex structures on $\wt M:$ to each point $a\in\bcp^1$ there is an associated complex structure $J_a$ on $\wt M$ given by the complex structure on the fiber $\pi^{-1}(a)\cong\wt M\hookrightarrow Z_{\wt M}.$ Note that the compactification $\ol{\wt M}$ is not hyperk\"ahler and  the projection $\pi$ does not extend to the compactification of the twistor space. 
\vspace{.1in}

{\bf Kronheimer's  description of the twistor space of $\wt M:$}\\
On  $Z_{\wt M}$ there is a  preferred line bundle $\pi^*\OO(1).$ By Hartog's theorem this extends to a line bundle on $Z_{\ol{\wt M}},$ which can be restricted to a line bundle on $l_\infty.$
To understand the twistor space $Z_{\wt M},$ Kronheimer considers the following graded rings: $$\displaystyle A(Z_{\wt M})=\bigoplus_{k\geq0} H^0(Z_{\wt M}, \pi^*\OO(k)),~ A(l_\infty)=\bigoplus_{k\geq0} H^0(l_\infty, \pi^*\OO(k)),$$
 and $I\subset A(Z_{\wt M})$  the ideal generated by  $u, v,$ the pull-backs of the two sections of $\OO(1)\to\bcp^1.$ 
The two sections $u$ and $v$ are associated to the homogeneous coordinates $[u:v]$ ofn$\bcp^1.$
 There is a relation between the graded rings given by \cite[Prop. 2.3]{kron2}:
 \begin{equation} \label{exact-seq}
 0\to I\to A(Z_{\wt M}) \to A(l_\infty)\to 0. 
 \end{equation}
If we look at the affine varieties associated to the coordinate rings, the above sequence 
 can be interpreted as saying that there exists a four-dimensional affine variety $Y$ and a fibration $\phi:Y\to \CC^2\ni(u,v),$ such that $\phi^{-1}(0,0)=\CC^2/\Gamma, $ i.e. :

 \begin{equation} \label{def}
 \begin{array}[c]{cccc}
Y&\stackrel{}{\supset}&\phi^{-1}(0,0)&=\CC^2/\Gamma\\
\downarrow\scriptstyle{\phi}&&\downarrow\scriptstyle{}\\
\CC^2&\stackrel{}{\ni}&(u,v)=(0,0)
\end{array}
\end{equation}


Moreover \cite{kron2}, $Y$ is a deformation of $\CC^2/\Gamma ~(\Gamma \subset SU(2)),$ and it can be considered  a subset of $\CC^5(x,y,z,u,v).$ On $Y$ there is an induced $\CC^*$ action given by the grading on the $A(\cdot)$ rings such that $\phi$ is $\CC^*-$equivariant, and the action of $\CC^*$ on $\CC^2$ is the usual multiplication. Let $\ol Z^s=(Y\setminus 0)/\CC^*$ and $Z^s=(Y\setminus l^s)/\CC^*,$ where $l^s=\{u=v=0\}\subset \CC^5.$ The map $\phi$ induces a projection $\pi^s: Z^s\to\bcp^1$. Then, $\ol Z^s$ and $Z^s$ are orbifold models of the twistor spaces $Z_{\ol {\wt M}}$ and $Z_{\wt M}.$ The singularities of $\ol Z^s$ and $Z^s$ are 
contained in fibers  ${(\pi^s)}^ {-1}(a),$ for finitely many points $a\in \bcp^1$ (\cite[Lemma 2.7]{kron2}), and they correspond to contracting the holomorphic $2-$spheres of self-intersection $(-2)$ in $(\wt M,J_a)\approx(\pi^{-1}(a)\subset Z_{\wt M}).$
The twistor space $Z_{\wt M}$ is obtained by taking the simultaneous minimal resolution $\chi: Z_{\wt M}\to Z^s, $ and we have the following commuting diagram:  
\begin{equation}\label{twist-id}
\begin{array}[c]{ccc}
Z_{\wt M}&\stackrel{\chi}{\rightarrow}&Z^s\\
\downarrow\scriptstyle{\pi}&&\downarrow\scriptstyle{\pi^s}\\
\bcp^1&\stackrel{=}{\rightarrow}&\bcp^1
\end{array}
\end{equation}
Moreover, $\chi$ extends to the compactifications of the twistor spaces $\ol \chi: Z_{\ol {\wt M}}\to \ol Z^s$ \cite{kron2}.

In the remaining part of the section we redirect our attention towards the study of the manifold $(M, J, g)$ by looking at the equivariant  objects associated to its universal cover  with induced complex structure and metric $(\wt M, \wt J, \wt g)$.
The group $G\cong \pi_1(M)$ acts freely on $(\wt M, \wt J, \wt g)$ by holomorphic isometries. An equivalent definition of the twistor space of a manifold is given by the space of unit self-dual forms $S(\Lambda^+)$ \cite{bes}. Thinking of $Z_{\wt M}$ in this way, we see that there is an induced free action of the group $G$ on the twistor space, and as the action of $G$ on $\wt M$ is by isometries, the induced action on $Z_{\wt M}$ is holomorphic. 
As the constructions are canonical,  the twistor space $Z_M$ is obtained as the quotient $Z_{\wt M}/G,$ and the same is true for their compactifications: $Z_{\ol M}=Z_{\ol {\wt M}}/G.$

As $\wt M$ is hyperk\"ahler there is a global trivialization of $S(\Lambda^+),$ and the $G$ will act on this bundle by fixing two points in each fiber, the self-dual forms associated to the complex structures $\wt J$ and $-\wt J.$ The action of $G$ on $S(\Lambda ^+)$ is non-trivial, as $G$ gives the extension of the group $\Gamma\subset SU(2)$ to $H\subset U(2)$. The projection $\pi: Z_{\wt M}\to \bcp^1$ is induced by this trivialization and, eventually after reparametrization of  bundle $S(\Lambda^+),$ we can assume that $G$ acts on the fibration $Z_{\wt M}\to \bcp^1$ such that the action on $\bcp^1$ has exactly two fixed points $[0:1]$ and $[1:0].$ Let's assume that the fiber $\pi^{-1}([0:1])$ can be identified to $(\wt M, \wt J).$ Moreover, as $G\cong\ZZ_n$ and the action of $G$ on $\bcp^1$ is holomorphic and non-trivial, it is  of the form $\rho[u:v]=[\rho^ku:v],$ where  $\rho \in G$ is thought of as a $n^{th}$ root of unity  and $k$ is some integer associated to the action $G.$ 
 We can conclude the following about the twistor space of $M$:

\begin{prop}
The twistor space $Z_M$ does not fiber over $\bcp^1,$ but instead there is a holomorphic fibration: $Z_M\to (\bcp^1/G).$ The generic fiber is diffeomorphic to $\wt M,$ while the fibers above the two singular points are $(M, \pm J)$.
\end{prop}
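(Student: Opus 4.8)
The plan is to descend Kronheimer's twistor fibration $\pi\colon Z_{\wt M}\to\bcp^1$ along the free holomorphic $G$-action. Since $G$ acts on $Z_{\wt M}$ by biholomorphisms covering the rotation $\rho[u:v]=[\rho^k u:v]$ of the base, the map $\pi$ is $G$-equivariant, so the composite $Z_{\wt M}\xrightarrow{\pi}\bcp^1\to\bcp^1/G$ is $G$-invariant. As $Z_M=Z_{\wt M}/G$, it factors through a holomorphic map $\bar\pi\colon Z_M\to\bcp^1/G$, which is the asserted fibration; holomorphicity is automatic because $\pi$ and the two quotient maps are holomorphic. Here I would first record that the $G$-action on $\bcp^1$ is faithful: a nontrivial $g\in G=H/\Gamma$ has nontrivial determinant in $U(2)/SU(2)\cong S^1$, hence acts nontrivially on $K_{\wt M}$ and therefore rotates the sphere of complex structures nontrivially. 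Thus $\rho$ acts as a primitive rotation of order $n$ fixing only $[0:1]$ and $[1:0]$, and $\bcp^1/G\cong\bcp^1$ with these two points as its branch points.

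For the negative statement, the point is that a holomorphic fibration of $Z_M$ over $\bcp^1$ of twistor type is exactly property $(1)$, which the preceding discussion identifies as specific to hyperk\"ahler manifolds: by the inverse Penrose correspondence such a fibration would endow $(M,g)$ with an $S^2$-family of compatible complex structures, i.e.\ a global hyperk\"ahler structure. But the Remark following Proposition \ref{equiv-cl} records that $M$ is only locally hyperk\"ahler, since $G$ acts nontrivially on $K_{\wt M}$. Hence $Z_M$ cannot fiber over $\bcp^1$, and the quotient fibration $\bar\pi$ over $\bcp^1/G$ is the best available. (Equivalently and concretely, the given $\pi$ itself fails to descend, because its base action is nontrivial.)

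It remains to identify the fibers of $\bar\pi$. For $[a]\in\bcp^1/G$ away from the two branch points, its preimage in $\bcp^1$ is a free $G$-orbit of $n$ distinct points, so $\pi^{-1}(G\cdot a)$ is a disjoint union of $n$ copies of $(\wt M,J_a)$ cyclically permuted by $G$; since the stabilizer of $a$ is trivial, the quotient map $Z_{\wt M}\to Z_M$ restricts to a biholomorphism on a single fiber, whence $\bar\pi^{-1}([a])\cong\wt M$. Over the branch point that is the image of $[0:1]$ the stabilizer is all of $G$, which preserves $\pi^{-1}([0:1])=(\wt M,\wt J)$ and acts there by the deck transformations (holomorphic isometries), so $\bar\pi^{-1}([0:1])=\wt M/G=(M,J)$. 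The antipodal fixed point carries the conjugate structure, $\pi^{-1}([1:0])=(\wt M,-\wt J)$; since every $\wt J$-holomorphic isometry is also $(-\wt J)$-holomorphic, $G$ acts holomorphically on $(\wt M,-\wt J)$ with quotient $(M,-J)$.

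The main obstacle I anticipate is making the negative assertion airtight: ruling out \emph{every} $\bcp^1$-fibration of $Z_M$, rather than merely the non-descent of the given $\pi$, relies on the inverse twistor construction together with the failure of $M$ to be globally hyperk\"ahler. The equivariant descent and the fiber identifications are then essentially bookkeeping, once the $G$-equivariance of $\pi$ and the identification $\pi^{-1}([0:1])=(\wt M,\wt J)$ are taken from the preceding discussion.
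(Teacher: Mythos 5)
Your proposal is correct and follows essentially the same route as the paper, which states this proposition as a direct conclusion of the preceding discussion: the free holomorphic $G$-action on $Z_{\wt M}$ covers a nontrivial rotation of $\bcp^1$ fixing exactly $[0:1]$ and $[1:0]$, so the twistor fibration $\pi$ descends only to $Z_M\to\bcp^1/G$, with generic fiber $\wt M$ and fibers $(M,\pm J)$ over the two orbifold points. Your added justifications --- faithfulness of the $G$-action on $\bcp^1$ via the nontrivial action on $K_{\wt M}$, and the inverse Penrose argument that a twistor-type fibration over $\bcp^1$ would force $M$ to be hyperk\"ahler --- are details the paper leaves implicit, and you supply them correctly.
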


\begin{proof}[Proof of Theorem \ref{main-class}]
As we have seen in Section \ref{twistor}, Theorem \ref{ex-A}, Remark \ref{exist},  the simply connected case is completely understood from the work of Hitchin \cite{hit} and Kronheimer \cite{kron1,kron2}. We will finish the proof by treating the non-simply connected case.

%
%

 The map $\chi:Z_{\wt M}\to Z^s$ (\ref{twist-id}) is given by contracting the exceptional $(-2)-$spheres on each  fiber of  $\pi,$ or  equivalently looking at the fibration of the canonical models of the fibers. Hence the $G-$holomorphic action on $Z_{\wt M}$ induces a $G-$holomorphic action on $Z^s$ which is compatible with the fibration $\pi^s:Z^s\to\bcp^1, $ where $G$ acts on $\bcp^1$ as before, with fixed points $[0:1]$ and $[1:0]$. 
 
 The manifold $\wt M$ is mapped to ${(\pi^s)}^{-1}([0:1])\subset Z^s$ by $\chi.$   
As $\wt M$ is an ALE hyperk\"ahler manifold the only exceptional curves are a finite number of holomorphic spheres of self-intersection $-2,$ which are either isolated or form a tree configuration of type A, D or E. The group $G$ acts freely and holomorphically on $\wt M$ hence it is going to take holomorphic spheres and configurations of holomorphic spheres to them-selves. If $G$ or a subgroup of $G$ fixes such a configuration, then $G$ or the subgroup has to permute both the spheres and the intersection points of the spheres  of that configuration (as the action of $G$ is free). But as we have a tree configuration of spheres, there is one more sphere than intersection points, hence the subgroup of $G$ fixing a configuration of holomorphic spheres must be trivial. So, if we consider the manifold $\wt M^s\cong{(\pi^s)}^{-1}([0:1])\subset Z^s,$ the canonical model of $\wt M,$  then the induced action of $G$ on ${(\pi^s)}^{-1}([0:1])$  is free. The quotient $\wt M^s/G=:M^s$ is the canonical model of the manifold $(M, J).$

The fibration $\pi^s:Z^s\to \bcp^1$ is obtained by taking the $\CC^*-$quotient of $\phi:(Y\setminus l^s)\to \CC^2\setminus\{0\},$ where $\CC^*$ acts on $\CC^2$ by scalar multiplication. Hence, the $G-$action on $Z^s$ induces an action on $Y\setminus l^s.$ To see how this action acts on the central fiber $\phi^{-1}(0)\cong(\CC^2/\Gamma)$ we have to remember that the central fiber is associated to the orbifold compactification of the twistor space to $\ol{Z^s}=Z^s\cup (\bcp^1/\Gamma)=(Y\setminus\{0\})/\CC^*.$ But the compactification of  $Z_M$ is $Z_M\cup (l_\infty/G)=Z_M\cup(\bcp^1/H)$. Hence the action of $G=H/\Gamma$ on $\CC^2/\Gamma$ must be such that $(\CC^2/\Gamma)/G=(\CC^2/\Gamma)/(H/\Gamma)\cong\CC^2/H.$ 

Let $L$ be the line in $\CC^2$ corresponding to the point $[0:1]\in\bcp^1\cong(\CC^2\setminus\{0\})/\CC^*,$ and let 
  $\cal  X=\phi^{-1}(L)\subset Y$. As $Y$ is a deformation of a rational double point singularity, $\phi:\cal X\to L$ is a $1-$parameter Gorenstein deformation of $Y_0=\CC^2/\Gamma.$
 Moreover the action of $G$ restricts to $\cal X,$ it is compatible with the fibration, and $G$ acts trivially on $\CC=L.$ Hence we have $\phi:\cal X/G\to\CC$ is a $1-$parameter $\QQ-$Gorenstein deformation of $Y_0/G\cong\CC^2/H.$ 
We can use Theorem \ref{deform}, if $M$ does not contain exceptional $(-2)-$rational curves, or Proposition \ref{defman} otherwise, to
 conclude that $H$ is a finite cyclic group generated by a matrix of the form 
$$ \left(\begin{array}{cc}\rho & 0 \\0 & \rho^{dnm-1}\end{array}\right)$$
 with $\rho$ a root of unity of order $dn^2,$ where $n\geq2,$ and $m$  and $n$ are relatively prime. This also implies that the universal cover of $M$ has to be an $A-$type manifold, and the $\ZZ_n$ action is the one given in Proposition \ref{unsm}.

Moreover, $M$ is obtained from the $1-$parameter $\QQ-$Gorenstein deformation of $\CC^2/H$ given by $\phi:\cal X/G\to\CC,$ and if the fiber $M^s$ has singularities, they must be of the type $A$ and $M$ is the minimal resolution of $M^s$. 
The metric $g$ is the unique Ricci-flat K\"ahler metric in a given K\"ahler class.
\end{proof}

\section{Gibbons-Hawking formulation and ALF Ricci-flat K{\"a}hler metrics}\label{gib-hawk}

Hitchin's approach defines the metrics on $M$ via the twistor space.  An alternative construction of these metrics is due to  Eguchi, Hanson \cite{egha} on $A_1$ and Gibbons, Hawking \cite{giha} on any $A_k-$surfaces. It uses essentially the $S^1-$action on $(z_1, z_2)\in\CC^2$,  seen as a subgroup of $SU(2),$   or equivalently the representation of the angular coordinate of $A,$ as in section \ref{twistor}.

Let $x_i,$ $ i=1,\dots,k,$ be a collection of $k$ points in $\RR^3.$ Let 
  $V(x)=\frac12\sum_{i=1}^k \frac1{|x-x_i|}$ be a function defined on $\RR^3\s-\{x_i\}.$ It can be easily checked that $[\frac1{2\pi}*dV]\in H^2(\RR^3\s-\{x_i\},\ZZ),$ where $*$ denotes the Hodge star operator on the Euclidean $\RR^3.$ Associated to this cohomology class there is a principal $S^1$ bundle on $\RR^3\s-\{x_i\}.$  Let  $\pi':N'\to \RR^3\s-\{x_i\}$   be the total space of the bundle, with the natural projection. 
 Locally in a neighborhood of a point $x_i,$ the Chern class of the $S^1$ bundle is $-1,$ hence locally our bundle $N'$ is just the $S^1$ bundle corresponding to the Hopf fibration $\RR^4\s-\{0\}=\CC^2\s-\{0\}\to \RR^3\s-\{0\},$ where the $S^1$ action is multiplication by the unit complex numbers. We can then smoothly compactify $N'$ to $N=N'\cup\{\overline x_1,\dots ,\overline x_k\}$ by adding a point $\ol x_i$ corresponding to each $x_i.$ The $S^1$ action extends on $N$ by having the points $\{\overline x_1 ,\dots ,\overline x_k\}$ as its fixed points. Notice that the manifold $N$ has the same description as the smooth manifold $M_{re}$ associated to the deformation of a $A_k$ singularity and hence the same topology at infinity.
   
  Let $\omega$ be a connection $1-$form on $N'$ such that $ d\omega=\pi'^*(*dV).$  Let 
  $$g_0=\frac1V\omega^2+V\pi'^*ds^2$$ 
  be a metric on $N'$ where $ds^2$ is the Euclidean metric on $\RR^3.$ Then $g_0$ extends to a smooth ALE Ricci-flat metric $g$ on $N,$ which is known as the Gibbons-Hawking metric \cite{giha}. A different  choice of the connection $\omega$ will induce an isometric metric. 
  
  If we consider the differential manifold obtained by smoothing a $\frac1{dn^2}(1, dnm-1)$ singularity, we need to restrict ourselves to the case $k=dn,$ and choose the points $x_i$ to be the vertices of $d$ regular polygons, each with $n$ sides,  laying in horizontal planes $(b=ct)$ and centered at the origin.  
 Then the  manifold $N'$ admits an isometric action of the group $\ZZ_n$ analogous to the action  \ref{act-m^c}:
 \begin{equation}\label{act-gh}
\text{if}~\rho \in\CC, \rho^n=1,~~\text{then}~ \rho(a)=\rho^{-m}a,~\rho (b)=b, ~ \text{and} ~ \rho ( \theta)=\rho \theta \end{equation}
 where $(a,b)\in \CC\times\RR\cong\RR^3$ and $\theta\in S^1$ is a local coordinate of  the principle $S^1$ bundle, and on each fiber the  action of the group $\ZZ_n$ is just the action of the finite subgroup of order $n.$ This action extends to a free action on $N.$ The quotient manifold $N/\ZZ_n$ is diffeomorphic to the smoothing of a $\frac1{dn^2}(1, dnm-1)$ singularity. Moreover as both the function $V$ and the connection 1-form are group invariant, we have an induced ALE Ricci flat  metric on the quotient space. 
 On $N,$ \cite[Sec.2]{akl}, we can define a $\ZZ_n-$invariant complex structure as follows:  given $\frac{\partial}{\partial \theta}, \frac{\partial}{\partial b}, \frac{\partial}{\partial a_1}=\frac{\partial}{\partial Re(a)}, \frac{\partial}{\partial a_2}=\frac{\partial}{\partial Im(a)}$  the vector basis associated to the local coordinates $(\theta, b, a)\in N',$ the complex structure $J$ is defined as $J (V^{\frac12}\frac{\partial}{\partial \theta})=\frac{\partial}{\partial b}, J(V^{-\frac12}\frac{\partial}{\partial a_1})=V^{-\frac12}\frac{\partial}{\partial a_2}.$ An easy check shows that $J$ is  integrable, compatible with the metric $g_0$  and extends to the total space $N$, \cite{akl}. 
 
 As in the ALE-case, we can use the above description to construct  ALF (asymptotically locally flat) Ricci-flat  K\"ahler metrics on a manifold obtained by smoothing a $\frac1{dn^2}(1, dnm-1)$ singularity. A metric is said to be ALF if the complement of a compact subset is finitely covered by $(\RR^3\setminus Ball)\times S^1$ with the metric decaying to the flat product metric. These metrics are obtained by replacing the function $V(x)$, with $V'(x):=1+V(x).$  In this case, the above construction is going to give an ALF Ricci-flat  K\"ahler metric on the same smooth space $N$. These metrics are know as the multi-Taub-NUT metrics. As $V'(x)$ is $\ZZ_n$ invariant, we have an induced ALF Ricci-flat K\"ahler metric on $N/\ZZ_n.$ 
 
 In his paper \cite{min}, Minerbe gives a classification of gravitational instantons (hyperk\"ahler metrics) of cyclic type, showing that they are the flat $\CC\times \CC/\ZZ$ or the multi-Taub-NUT manifolds. If one wants to generalize his classification, to a classification of ALF Ricci-flat K\"ahler 4-manifolds, then the above quotients should be included.
 
 Another application of the Gibbons-Hawking construction is in the spirit of Anderson-Kronheimer-LeBrun's result \cite{akl}. We can choose infinitely many points $\{x_i\}_{i\in\NN}\subset\RR^3$ arranged in $n-$tuples which are vertices of infinitely many regular polygons $P_j, j\in \NN$. The polygons,  $P_j$ are chosen to   have all  $n$ sides, lie on horizontal planes (constant $b-$coordinates) and inscribed in circles of radii $j^2$ centered about the $b-$axis. Then, as in \cite{akl}, we have constructed a manifold $N_\infty$ with infinite homology. Moreover,  the metric and complex structure are invariant under a $\ZZ_n$ free action of the form   \ref{act-gh}. Hence, on the non-simply connected manifold $N_\infty/\ZZ_n$ there is an induced complete Ricci-flat K\"ahler metric. 
 
 The above examples of Ricci-flat K\"ahler manifolds which are ALF or have infinite topology lead us to conclude that the ALE condition on our classification is not only necessary for our proofs, but without it the classification is not known.
  
 \section{An application}
 
 The deformations of singularities in this paper give us  a {\it local model} for smoothing an orbifold and constructing good metrics on a nearby space. We would also like to understand if the same is true  for global deformations of compact orbifolds.
 In this context we can address the following problem: {\it If the orbifold manifold is endowed with a good orbifold K\"ahler metric, does a nearby smoothing admit a  good metric? } The author answers positively this question in the case of constant scalar curvature K\"ahler metric in \cite{suv} (in preparation). The more precise result is the following:

\begin{thm}\cite{suv} Let $(M_0,g_0)$ be a compact constant scalar curvature K{\"a}hler orbifold of dimension two with isolated singularities. Assume that there is no non-zero holomorphic vector field vanishing somewhere on $M_0$ and that $M_0$ admits a one-parameter $\QQ-$Gorenstein smoothing $\pi:{\cal M} \to \Delta \subset \CC$. Then any manifold $M_t=\pi^{-1}(t)$ sufficiently close to $M_0$ admits a constant scalar curvature K{\"a}hler metric.
\end{thm}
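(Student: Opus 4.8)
The plan is to prove this by a gluing (desingularization) argument of Arezzo--Pacard type, run in the \emph{smoothing} direction. The starting observation is that near each isolated singularity $p_j\in M_0$ the orbifold is modeled on $\CC^2/H_j$ with $H_j\subset U(2)$, and the cscK metric $g_0$ is asymptotic there to the flat orbifold cone. By Theorem~\ref{main-A}, the one-parameter $\QQ$-Gorenstein smoothing of each $\CC^2/H_j$ carries an ALE Ricci-flat K\"ahler metric; being Ricci-flat it is in particular scalar-flat, which is exactly the local model one wants to splice into a cscK background. The complex structures match automatically, since the local picture of $M_t=\pi^{-1}(t)$ near $p_j$ is precisely the smoothing fiber of $\CC^2/H_j$ supplied by the $\QQ$-Gorenstein family ${\cal M}\to\Delta$, and the ALE metric of Theorem~\ref{main-A} is K\"ahler for that complex structure. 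The parameter $t$ controls the size of the smoothing: after rescaling the ALE model by a factor $\varepsilon=\varepsilon(t)\to0$, its scalar-flat metric converges to the flat cone. So first I would excise small balls $B(p_j,r)$ from $(M_0,g_0)$, glue in the rescaled ALE pieces across annular necks using cut-off functions, and thereby build a family of K\"ahler metrics $\omega_t$ on $M_t$ with K\"ahler class in a controlled region of $H^2(M_t,\RR)$.

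The second step is to estimate the scalar curvature of the approximate metric $\omega_t$. Away from the necks the metric equals either $g_0$ (with scalar curvature the constant $S_0$) or the rescaled Ricci-flat model (with scalar curvature $0$), so all the error is concentrated on the gluing annuli, where it is governed by the rate at which $g_0$ and the ALE metric approach the common flat cone. Using the $O(r^{-4})$ decay built into Definition~\ref{ALE} for the ALE end, together with the analogous expansion of $g_0$ at the orbifold point, I would show that $S(\omega_t)-S_0$ is small in a weighted H\"older norm $C^{0,\alpha}_\delta$ adapted to the neck, with a bound of the form $\|S(\omega_t)-S_0\|_{C^{0,\alpha}_\delta}=O(\varepsilon^{\kappa})$ for some $\kappa>0$.

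The heart of the argument is the linearized analysis. The linearization of $\varphi\mapsto S(\omega_t+i\partial\bar\partial\varphi)$ at $\omega_t$ is, up to lower-order terms, the Lichnerowicz operator $\mathcal{L}_{\omega_t}$, whose kernel consists of the holomorphy potentials of holomorphic vector fields. On the orbifold side, the hypothesis that $M_0$ carries no non-zero holomorphic vector field vanishing somewhere forces $\ker\mathcal{L}_{\omega_0}$ to be just the constants, so $\mathcal{L}_{\omega_0}$ is an isomorphism on functions of mean zero; on the ALE side, Ricci-flatness makes $\mathcal{L}$ reduce to the square of the Laplacian to leading order, for which a sharp Fredholm theory is available on weighted spaces. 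The plan is to patch these two invertibility statements into a right inverse $\mathcal{L}_{\omega_t}^{-1}$ whose operator norm is bounded independently of $t$, by the standard parametrix construction over the decomposition into orbifold part, neck, and ALE pieces.

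The main obstacle is exactly this uniform invertibility, since $\mathcal{L}_{\omega_t}$ degenerates as the neck pinches: the weight $\delta$ must be chosen strictly between the indicial roots of the square of the Laplacian on the flat cone $\CC^2/H_j$, so that no small eigenvalues escape to zero and no spurious cokernel appears in the neck. Once a uniformly bounded right inverse is in hand, a contraction-mapping (Newton) scheme applied to the nonlinear equation $S(\omega_t+i\partial\bar\partial\varphi)=\mathrm{const}$ upgrades the $O(\varepsilon^{\kappa})$ approximate solution to an exact cscK metric on $M_t$ for all $t$ sufficiently small, completing the proof.
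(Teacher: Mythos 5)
You should know at the outset that this paper contains no proof of the statement: the theorem is quoted in Section 6 from the reference \cite{suv}, which is listed as ``in preparation,'' and appears here only as an announced application. So there is no in-paper argument to compare yours against; I can only judge your proposal on its own merits. On those terms, your strategy is the expected one for results of exactly this kind (it is the strategy of the Arezzo--Pacard blow-up theorems, adapted to the smoothing direction as in Biquard--Rollin): use Theorem~\ref{main-A} to supply ALE Ricci-flat (hence scalar-flat) K\"ahler local models on the fibers of the local $\QQ$-Gorenstein smoothings, graft the rescaled models onto the cscK orbifold background, estimate the scalar curvature error on the neck using the $O(r^{-4})$ decay of Definition~\ref{ALE}, and perturb via a uniformly invertible Lichnerowicz operator, the hypothesis on holomorphic vector fields guaranteeing that the kernel over $M_0$ consists of constants. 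This is the right skeleton.

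There are, however, two places where your sketch glosses over genuine difficulties. First, the assertion that ``the complex structures match automatically'' is only correct near the singular points: there, Proposition~\ref{unsm} (versality of the Koll\'ar--Shepherd-Barron family) identifies the germ of $M_t$ with a fiber of the model smoothing of $\CC^2/H_j$, so the ALE metric of Theorem~\ref{main-A} is indeed K\"ahler for the correct complex structure. Away from the singularities, however, $M_t$ is \emph{not} biholomorphic to $M_0$ minus its singular points; the family $\pi:\mathcal{M}\to\Delta$ is only smoothly, not holomorphically, trivial there, so $J_t$ merely converges to $J_0$, and $g_0$ is not K\"ahler with respect to $J_t$. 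The background metric must itself be corrected (say, by perturbing its K\"ahler potential relative to $J_t$) before any cut-off gluing makes sense, and this correction contributes error terms that your estimate scheme must absorb. Relatedly, $H^2(M_t,\RR)$ differs from $H^2(M_0,\RR)$ because of the vanishing cycles, so ``a controlled region of $H^2(M_t,\RR)$'' requires genuine bookkeeping rather than being automatic. Second, you treat the ALE model as unique up to scale, but when the Milnor fiber of $\frac{1}{dn^2}(1,dnm-1)$ has $b_2=d-1>0$, Theorem~\ref{main-A} produces a whole family of ALE Ricci-flat metrics parametrized by the K\"ahler class; which member you glue in is dictated by the areas you want the vanishing cycles to have on $M_t$, and these moduli must enter the construction as matching parameters. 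Simply choosing the weight $\delta$ between indicial roots does not make this degree-of-freedom/obstruction pairing go away. Neither issue is fatal --- both are dealt with in the published proofs of theorems of this type --- but a complete argument must address them explicitly.
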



\section*{Acknowledgements} The author would like to thank  Professor Claude LeBrun for stimulating discussions on the subject.

\bibliographystyle{alpha} 

\begin{thebibliography}{41}

\bibitem{akl}
{\sc M.~Anderson, ~P.~ Kronheimer, ~C.~LeBrun, }
{\em Complete Ricci-flat K\"ahler manifolds of infinite topological type,} 
Comm. Math. Phys. 125 (1989), no. 4, 637--642

\bibitem{ahs}
{\sc M.F.~Atiyah,~N.J.~ Hitchin,~I.~M.~ Singer}, 
{\em Self-duality in four-dimensional Riemannian geometry},
Proc. Roy. Soc. London Ser. A 362 (1978), no. 1711, 425--461

\bibitem{bako}
{\sc S.~Bando, R.~ Kobayashi, }
{\em Ricci-flat K\"ahler metrics on affine algebraic manifolds. II. }
Math. Ann. 287 (1990), no. 1, 175--18

\bibitem{bes}
{\sc A.~Besse,}
{\em Einstein manifolds,} 
 Springer-Verlag, Berlin, 1987


\bibitem{casi}
{\sc D.~Calderbank, M.~ Singer, }
{\em Einstein metrics and complex singularities.}
 Invent. Math. 156 (2004), no. 2, 405--443


\bibitem{cat}
{\sc F.~Catanese,} 
{\em Automorphisms of rational double points and moduli spaces of surfaces of general type.}
 Compositio Math. 61 (1987), no. 1, 81Ð102


\bibitem{egha}
{\sc T.~Eguchi, A.~ Hanson},
{\em Asymptotically flat solution to Euclidean gravity},
Phys. Lett. 74 B (1978), 249-251




\bibitem{giha}
{\sc G.~Gibbons, S.~ Hawking, }
{\em Gravitational multi-instantons}, 
Phys. Lett.78B, 430--432 (1978)

\bibitem{hit}
{\sc N.J.~Hitchin},
{\em Polygons and gravitons}, 
Math. Proc. Cambridge Philos. Soc. 85 (1979), no. 3, 465--476. 


\bibitem{hklr}
{\sc N.J.~Hitchin, A.~Karlhede, U.~Lindstr\"om, M.~ Ro\c{c}ek, }
{\em Hyper-K\"ahler metrics and supersymmetry},
Comm. Math. Phys. 108 (1987), no. 4, 535--589

\bibitem{joy}
{\sc D. Joyce,}
{\em Compact manifolds with special holonomy},
 Oxford Mathematical Monographs. Oxford University Press, Oxford, 2000

\bibitem{kosb}
{\sc J.~Koll\'ar, N.I.~  Shepherd-Barron, }
{\em Threefolds and deformations of surface singularities,}
Invent. Math. 91 (1988), no. 2, 299--338

\bibitem{kron1}
{\sc P.~B.~Kronheimer}, 
{\em The construction of ALE spaces as hyper-K\"ahler quotients,}
 J. Differential Geom. 29 (1989), no. 3, 665--683

\bibitem{kron2}
{\sc P.~B.~Kronheimer}, 
{\em A Torelli-type theorem for gravitational instantons,}
 J. Differential Geom. 29 (1989), no. 3, 685--697


\bibitem{man}
{\sc M.~ Manetti,}
{\em  Q-Gorenstein smoothings of quotient singularities.}
 Preprint Scuola Normale Superiore Pisa (1990)

\bibitem{min}
{\sc V.~Minerbe},
{\em Rigidity for Multi-Taub-NUT metrics,}
 arXiv:0910.5792

\bibitem{pet}
{\sc P.~Petersen,}
{\em Riemannian geometry,} Second edition. Graduate Texts in Mathematics, 171. Springer, New York, 2006

\bibitem{schl}
{\sc M.~Schlessinger,}
{\em Rigidity of quotient singularities,}
Invent. Math. 14 (1971), 17Ð26

\bibitem{suv}
{\sc I.~\c Suvaina,}
{\em Constant scalar curvature K\"ahler metrics and deformation of $2-$orbifolds,}
in preparation


\bibitem{ti-vi}
{\sc G. Tian, J.~ Viaclovsky,}
{\em Moduli spaces of critical Riemannian metrics in dimension four.}
 Adv. Math. 196 (2005), no. 2, 346--372




\end{thebibliography}

\end{document}